\theoremstyle{plain}
\newtheorem{Theorem}{Theorem}[section]
\newtheorem{Lemma}[Theorem]{Lemma}
\newtheorem{Proposition}[Theorem]{Proposition}
\newtheorem{Conjecture}[Theorem]{Conjecture}
\newtheorem{Fact}[Theorem]{Fact}
\theoremstyle{remark}
\theoremstyle{definition}
\newtheorem{Example}[Theorem]{Example}
\newtheorem{Definition}[Theorem]{Definition}
\newtheorem{Algorithm}[Theorem]{Algorithm}
\DeclareMathOperator{\conv}{CONV}
\DeclareMathOperator{\lcm}{LCM}
\DeclareMathOperator{\initial}{in}
\newcommand{\monoid}[1]{\mathbf{#1}}
\newcommand{\isom}{\cong}
\newcommand{\realpart}[1]{\Re(#1)}
\newcommand{\initialord}[1]{\initial_{<}(#1)}
\newcommand{\groebner}{Gr\"obner }
\newcommand{\moebius}{M\"obius }
\title{Ehrhart Series for Connected Simple Graphs}
\author{Matsui Tetsushi\\
\small{Principles of Informatics Research Division, %\\
National Institute of Informatics.}\\
%Chiyoda-ku, Tokyo 101-8430, Japan.
\texttt{tetsushi@nii.ac.jp}}
\begin{document}
\maketitle

\begin{abstract}
The Ehrhart ring of the edge polytope \(\mathcal{P}_G\) for 
a connected simple graph \(G\) is known to coincide with
the edge ring of the same graph if \(G\) satisfies the odd cycle condition.
This paper gives for a graph which does not satisfy the condition,
a generating set of the defining ideal of the Ehrhart ring
of the edge polytope,
described by combinatorial information of the graph.
From this result, two factoring properties of the Ehrhart series
are obtained;
the first one factors out bipartite biconnected components,
and the second one factors out a even cycle which shares only one edge
with other part of the graph.
As an application of the factoring properties, the root distribution of
Ehrhart polynomials for bipartite polygon trees is determined.
\end{abstract}

\section{Introduction}
\label{sec:intro}

\subsection{Background}
\label{sec:background}

This paper studies explicit construction and factoring properties
of Ehrhart series of edge polytope for connected simple graphs.
It is motivated by the root distribution of Ehrhart polynomials,
which is one of the current topics on computational commutative algebra.
In particular, the conjecture of Beck et al.~\cite{BDDPS2005} 
attracts much attention.
\begin{Conjecture}[Beck--De~Loera--Develin--Pfeifle--Stanley]\label{conj:dstrip}
  All roots $\alpha$ of Ehrhart polynomials of lattice $D$-polytopes
  satisfy $-D \le \realpart \alpha \le D - 1$.
\end{Conjecture}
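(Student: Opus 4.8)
The plan is not to settle Conjecture~\ref{conj:dstrip} in full generality---which is hopeless with current tools---but to confirm it, with an exact description of the root locus, for the family this paper controls: the edge polytopes \(\mathcal{P}_G\) of bipartite polygon trees. The starting point is that a bipartite graph \(G\) trivially satisfies the odd cycle condition, so the Ehrhart ring of \(\mathcal{P}_G\) equals the edge ring \(k[G]\); when moreover \(G\) is a polygon tree---every block a cycle, glued in a tree pattern---the defining ideal of \(k[G]\) is generated by the \(k\) cycle binomials \(\prod(\text{odd edges})-\prod(\text{even edges})\), one per even cycle \(C_{2\ell_j}\), and these form a regular sequence (\(G\) is a bipartite ``ring graph''). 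Hence \(k[G]\) is a complete intersection and
\[
\sum_{n\ge 0} L_{\mathcal{P}_G}(n)\,t^n \;=\; \frac{\prod_{j=1}^{k}\bigl(1-t^{\ell_j}\bigr)}{(1-t)^{N}}, \qquad N=\#E(G)=\sum_{j}2\ell_j-(k-1).
\]
Morally this is exactly the output of the two factoring properties: the biconnected-component factorization peels off each even block \(C_{2\ell_j}\), each contributing a factor \(1-t^{\ell_j}\) with the accompanying \((1-t)\) bookkeeping.

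From here I would proceed in three steps. First, identify the single-cycle factor: since the toric ideal of \(C_{2\ell}\) is principal of degree \(\ell\), one gets \(h^{*}_{\mathcal{P}_{C_{2\ell}}}(t)=1+t+\cdots+t^{\ell-1}=\frac{1-t^{\ell}}{1-t}\), with \(\dim\mathcal{P}_{C_{2\ell}}=2\ell-2\). Second, assemble: \(\dim\mathcal{P}_G =: D = 2\sum_j(\ell_j-1)\) and
\[
h^{*}_{\mathcal{P}_G}(t)=\prod_{j=1}^{k}\frac{1-t^{\ell_j}}{1-t}=\prod_{j}\;\prod_{\substack{d\mid \ell_j\\ d>1}}\Phi_d(t),
\]
a product of cyclotomic polynomials; in particular \(h^{*}_{\mathcal{P}_G}\) is palindromic, all of its complex zeros lie on \(|t|=1\), and \(\deg h^{*}_{\mathcal{P}_G}=D/2\). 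Third, translate to \(L_{\mathcal{P}_G}\): palindromicity yields the Ehrhart functional equation, so the roots of \(L_{\mathcal{P}_G}\) are symmetric about the vertical line \(\realpart{z}=\tfrac{\deg h^{*}-1-D}{2}=-\tfrac{D}{4}-\tfrac12\); and the unit-circle location of the \(h^{*}\)-zeros, fed into \(L_{\mathcal{P}_G}(n)=\sum_i h^{*}_i\binom{n+D-i}{D}\) (equivalently the inclusion--exclusion sum \(\sum_{S\subseteq[k]}(-1)^{|S|}\binom{n-\sum_{j\in S}\ell_j+N-1}{N-1}\)), should force every real part into the interval \([-D/2,\,-1]\), which lies inside \([-D,\,D-1]\). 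For the base case \(C_{2\ell}\) this is visible directly: \(L_{\mathcal{P}_{C_{2\ell}}}(n)=\binom{n+2\ell-1}{2\ell-1}-\binom{n+\ell-1}{2\ell-1}\) splits off real roots \(-1,-2,\dots,-(\ell-1)\) times a ``difference of products of consecutive linear factors,'' the latter handled by a Rodriguez-Villegas-type argument.

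The crux, and the main obstacle, is this third step in the glued case. Because \(\deg h^{*}=D/2<D\), the classical Rodriguez-Villegas theorem---which would pin \emph{all} roots to the symmetry line when \(\deg h^{*}=D\)---does not apply verbatim: already \(\mathcal{P}_{C_4}\) is the unit square, whose Ehrhart polynomial \((n+1)^2\) has a double root at \(-1=-D\), sitting on the boundary of the strip rather than at the centre. One must therefore either lift to the equal-degree situation by a dilation/Veronese substitution, or analyze the signed binomial sum directly, showing that the interlacing structure inherited from the cyclotomic factors confines real parts to within \(D/4-\tfrac12\) of the centre. A secondary technical point, handled by the paper's explicit generating set of the defining ideal, is verifying that the \(k\) cycle binomials really form a regular sequence for every polygon-tree gluing pattern, so that the complete-intersection Ehrhart series---and with it the clean cyclotomic form of \(h^{*}_{\mathcal{P}_G}\)---is legitimate in all cases.
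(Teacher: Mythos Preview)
The statement is a conjecture; the paper does not prove it in general, only for bipartite polygon trees via Proposition~\ref{prop:ptroots}. Your proposal targets the same special case, and your derivation of the Ehrhart series matches the paper's Proposition~\ref{prop:evenduct} in its conclusion (the paper reaches it by iterating the second factoring property, Theorem~\ref{thm:2nd}, rather than by invoking a regular-sequence/complete-intersection argument, but the outcome is the same cyclotomic \(h^{*}\)).

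The genuine gap is your third step, which you yourself flag as ``the crux, and the main obstacle.'' You note that \(\deg h^{*}=D/2<D\), so the version of Rodriguez-Villegas pinning \emph{all} roots to the critical line does not apply, and you propose workarounds (a Veronese lift, or direct analysis of the signed binomial sum). But the paper uses a more flexible form of Rodriguez-Villegas's result that dissolves the obstacle entirely: if \(f\in S_a\) and \(\alpha\) is a root of unity, then \((E^{-}-\alpha)f\in S_{a-1}\), where \(E^{-}\) is the backward shift and \(S_a\) consists of polynomials with roots \(-1,\dots,-a\) together with points on \(\realpart{x}=-(a+1)/2\) (Lemma~\ref{lem:roots}). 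Writing \(i_G(m)=h^{*}(E^{-})\binom{m+D}{D}\), factoring \(h^{*}(t)=\prod_j(t-\alpha_j)\) with every \(\alpha_j\) a root of unity, and applying the lemma \(h=\deg h^{*}\) times puts \(i_G\) in \(S_{D-h}\). This locates the roots exactly---some negative integers plus a single vertical line at \(\realpart{x}=-D/4-\tfrac12\)---and containment in the strip \([-D,D-1]\) is immediate. Your proposed detours are unnecessary once this operator form of the lemma is in hand; the ``equal-degree'' hypothesis you worry about is simply not required.
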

The author contributed to a recent paper~\cite{MHNOH2011} providing some
computational evidence of the conjecture.
However, in the paper, rigorous proofs are shown only in a few cases,
because the Ehrhart polynomials are known only for a few families:
such as complete graphs or complete multipartite graphs.

The Ehrhart polynomial is always related to the Hilbert series of
a certain graded \(K\)-algebra, called the Ehrhart ring.
We call the Hilbert series of the Ehrhart ring the {\em Ehrhart series}.
The subject of this paper is those for the edge polytopes.
An edge polytope \(\mathcal{P}_G\) is an integral convex polytope defined
for a graph \(G\) (see Section~\ref{sec:edgept}).
Associated with the graph \(G\), there is a graded \(K\)-algebra \(K[G]\),
called an edge ring, which gives
the Ehrhart series for \(G\) if the algebra is normal;
the normality of \(K[G]\) is equivalent to the ``odd cycle condition''
on the graph \(G\)~\cite{OH1998,SVV1998}.
The definition of the odd cycle condition is as follows~\cite{FHM1965}:

\begin{Definition}\label{dfn:occ}
  The {\em odd cycle condition\/} is a condition for a graph \(G\) whereby
  any two odd cycles in \(G\) share a vertex or they are connected
  by a path whose length is one.
\end{Definition}

In such cases, the Ehrhart series is explicitly computable from the
\groebner basis of a toric ideal \(I_G\), called an edge ideal.
However, if \(K[G]\) is not normal, there have been no direct construction
for the Ehrhart ring for the graph \(G\);
one has to go through the edge polytope and use Brion's theorem or something
to obtain the Ehrhart polynomial.
It is hard to see the relationship between a graph and 
the corresponding Ehrhart ring.
This paper describes the Ehrhart ring directly from
combinatorial information of the graph.

\subsection{Preliminaries}
\label{sec:intrograph}

\subsubsection{Definitions of Graphs and Hypergraphs}
\label{sec:redef}

Definitions of the (hyper-)graphs presented in this section may seem
somewhat peculiar but are convenient for our purpose.
Let \(G\) be a triple \((V, E, f)\) of a finite set \(V\),
a finite set \(E\) disjoint with \(V\), and a map \(f\) from \(E\) to
a ring \(R=R(V)\).
By putting a few restrictions on \(f\), we have classes of (hyper-)graphs.
There are several useful conditions:
\begin{enumerate}
  \setlength{\itemsep}{1pt}
\item\label{G00} \(R\) is the free commutative monoid ring \(K[T;\monoid{V}]\)
with \(K\) a field;
\item\label{G01} \(R\) is the free (non-commutative) monoid ring \(K[T;V^*]\)
with \(K\) a field;
\item\label{G10} all the images of \(f\) are monomials;
\item\label{G11} all the images of \(f\) are quadratic;
\item\label{G3} all the images of \(f\) are squarefree.
\item\label{G2} \(f\) is injective;
\end{enumerate}

The {\em graphs} are obtained from conditions~\ref{G00}, \ref{G10}
and \ref{G11}.
We restrict our discussion throughout the paper to the {\em simple graphs},
which requiring \ref{G3} and \ref{G2} in addition to the conditions for graphs;
by condition~\ref{G3}, there are no loops,
and by condition~\ref{G2}, there are no multiple edges.
The {\em hypergraphs} are those triples with the conditions \ref{G00}, 
\ref{G10}, \ref{G3} and \ref{G2}.
In other words, relaxing the condition of simple graphs gives hypergraphs
that the number of vertices connected by an edge is arbitrary instead of two.
The condition~\ref{G01} instead of \ref{G00} for graphs
gives the {\em directed graphs}, though we do not use it in this paper.

In the following, we omit the dummy variables like \(T\) in \(K[T;\monoid{V}]\);
instead, we denote the ring simply as \(K[\monoid{V}]\).
Hence, \(\monoid{V}\) is understood as the free commutative monoid
generated by vertices, written multiplicatively.
Moreover, we assume that the characteristic of \(K\) is zero.

\subsubsection{Edge Polytopes and Ehrhart Polynomials}
\label{sec:edgept}

Let \(G=(V,E,\phi)\) be a graph without multiple edges.
The edge polytope \(\mathcal{P}_G\) of a simple graph \(G\)
is defined as follows.
Let the vertex set \(V\) of \(G\) be \(\set{v_1,\ldots,v_n}\), 
and let a homomorphism
\(\epsilon\) from \(\monoid{V}\), the monoid of monomials,
to \(\mathbb{Z}^n\) be defined by
\[\epsilon: \prod v_i^{m_i} \longmapsto \sum m_i \mathbf{e}_i,\]
where \(\mathbf{e}_i\) is the \(i\)-th fundamental unit vector.
Then, a map \(\rho\) from the edge set \(E\) of \(G\) to \(\mathbb{Z}^n\)
is defined as the composition \(\epsilon \circ \phi\) .
The edge polytope \(\mathcal{P}_G \subset \mathbb{R}^n\) is the convex hull
of the image of \(\rho\):
\begin{eqnarray*}
  \mathcal{P}_G &=& \conv \rho(E)\\
                &=& \Set{\sum_{e_i \in E}\lambda_i\rho(e_i) | 0\leq\lambda_i\leq 1,\ \sum_{e_i \in E}\lambda_i = 1,\ \lambda_i\in\mathbb{R}}.
\end{eqnarray*}

The Ehrhart polynomial \(i_G = i_{\mathcal{P}_G}\) of the edge polytope
\(\mathcal{P}_G\) is the counting function of the integral points in
dilated polytopes, that is,
\(i_G(m) = |m \mathcal{P}_G \cap \mathbb{Z}^n|\).
For convenience, we define \(i_G(0)=1\), and
we call the generating function
\[\sum_{m=0}^{\infty} i_G(m) t^m\]
the Ehrhart series \(H_G(t) = H_{\mathcal{P}_G}(t)\) for \(\mathcal{P}_G\).

\subsubsection{Edge Ring and Edge Ideal}
\label{sec:erei}

The material presented in this short section is not new,
but only notations may differ from standard one.
Let \(G=(V,E,\phi)\) be a connected simple graph.
The graph map \(\phi: E \longrightarrow K[\monoid{V}]\) of a graph \(G\)
is linearly extended to a \(K\)-algebra homomorphism 
\(\phi^*: K[\monoid{E}] \longrightarrow K[\monoid{V}]\).
The edge ideal \(I_G \subset K[\monoid{E}]\) is defined as:
\[I_G\ = (t - u\,|\,t\text{, }u\text{ are monomials and }\phi^*(t)=\phi^*(u)).\]
It is a homogeneous binomial ideal.
The edge ring \(K[G]\) of \(G\) is the image of \(\phi^*\):
\[K[G] = \phi^*(K[\monoid{E}]) \isom K[\monoid{E}] / \ker \phi^*,\]
and \(I_G=\ker\phi^*\).
The generators of \(I_G\) correspond to a certain class of
even closed walks on \(G\).
For example, the Graver basis of \(I_G\),
which consists of primitive (Definition~\ref{dfn:primitive})
even closed walk, is given in the following theorem of
Tatakis and Thoma~\cite{TT2010}.

\begin{Theorem}[\cite{TT2010}]\label{thm:evenclosedwalks}
  Let \(G\) be a graph and \(w\) an even closed walk of \(G\).
  The binomial \(B_w\) is primitive if and only if 
  \begin{enumerate}
  \setlength{\itemsep}{1pt}
  \item every block of \(w\) is a cycle or a cut edge,
  \item every multiple edge of the walk \(w\) is a double edge of the
    walk and a cut edge of \(w\),
  \item every cut vertex of \(w\) belongs to exactly two blocks and
    it is a sink of both.
  \end{enumerate}
\end{Theorem}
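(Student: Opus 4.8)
\emph{Reformulation.}
Write the walk as $w=(e_{i_1},\dots,e_{i_{2k}})$, so that $B_w=m_w^+-m_w^-$ with $m_w^+=\prod_{j\ \mathrm{odd}}e_{i_j}$ and $m_w^-=\prod_{j\ \mathrm{even}}e_{i_j}$; this lies in $I_G=\ker\phi^*$ because $\phi^*$ sends both monomials to the product of the vertices visited by $w$, counted with multiplicity. A binomial $u-v\in I_G$ with $u\mid m_w^+$ and $v\mid m_w^-$ is the same datum as a sub-multiset $S^+$ of the odd-position edges and a sub-multiset $S^-$ of the even-position edges of $w$ covering every vertex equally often; an Eulerian-type argument decomposes $S^+\uplus S^-$ into closed walks alternating between $S^+$- and $S^-$-edges, each of even length. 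Hence $B_w$ fails to be primitive (Definition~\ref{dfn:primitive}) exactly when such $S^\pm$ exist with $S^+\neq S^-$ and $(S^+,S^-)$ not the full pair, i.e. when $w$ contains a proper nontrivial \emph{inscribed} even closed subwalk (one whose $+$/$-$ edges come from the odd/even positions of $w$). The plan is to show that conditions~(1)--(3) are precisely what forbids this, proving the two implications separately.

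\emph{Necessity.}
For the contrapositive I would, whenever one of (1)--(3) fails, exhibit such a subwalk, with one construction per condition. If some block of $w$ is $2$-connected but not a single cycle, an open ear decomposition of that block produces a theta subgraph (two vertices joined by three internally disjoint paths); rerouting the portion of $w$ inside the block across the redundant path — splitting the outcome into two closed pieces if a single reroute gives odd length — yields the required proper inscribed subwalk. If an edge of $w$ is a multiple edge violating~(2) (used at least three times, or twice but not as a cut edge), then either two of its occurrences have opposite position-parity, so that edge divides both $m_w^+$ and $m_w^-$, contradicting the coprimality of the terms of a primitive binomial; or all occurrences share a parity and one short-circuits them to a strictly shorter inscribed even closed subwalk. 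Finally, if a cut vertex $c$ of $w$ lies in three or more blocks, or in two blocks but is not a sink of both, then the passages of $w$ through $c$ can be regrouped so that a proper nonempty subfamily of the blocks at $c$ is split off into its own even closed walk, inscribed in $w$.

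\emph{Sufficiency.}
Conversely, assume (1)--(3) and suppose, for contradiction, a proper nontrivial inscribed even closed subwalk exists, recorded by $S^+\subseteq(\text{odd edges})$ and $S^-\subseteq(\text{even edges})$ with equal vertex coverage. I would induct on the number of blocks of $w$, peeling off a leaf $L$ of its block--cut tree. Conditions~(1)--(3) pin down how $w$ traverses $L$ — once around, if $L$ is a cycle block, and once out-and-back, if $L$ is a cut edge — and the sink hypothesis at the cut vertex $c$ attaching $L$ controls the position-parities of that traversal. Propagating the coverage-balance condition around $L$, starting from the vertices farthest from $c$, then forces $S^+\uplus S^-$ to contain either all edge-occurrences of $L$ or none; moreover the balance at $c$ couples $L$ with the block sharing $c$, so ``all of $L$'' cannot be compensated locally unless that neighbour is taken entirely too. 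Restricting $S^\pm$ to the remaining edges yields a smaller configuration with the same balance property and a block structure of the same allowed shape, and we recurse; the base case is a single cycle or single edge, where only the empty and full configurations are balanced. Thus $S^+\uplus S^-$ is forced to be empty or all of $w$, a contradiction, so $B_w$ is primitive.

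\emph{Main obstacle.}
The crux is the sufficiency direction, and within it the exact role of the sink hypothesis of~(3): it is what makes the position-parity $2$-coloring of $w$ restrict to each block compatibly — so that out-and-back cut edges contribute squared variables rather than common factors, and the coverage balance at a cut vertex ties its two blocks together rather than letting them be filled independently. Pinning down this parity bookkeeping at cut vertices and at multiple edges, and checking on the necessity side that the various reroutings never collapse to the trivial walk or back to $w$, is where the real work sits; the remainder is the standard dictionary between binomials of $I_G$ and even closed walks of $G$.
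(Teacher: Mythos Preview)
The paper does not prove Theorem~\ref{thm:evenclosedwalks}: it is quoted verbatim from Tatakis--Thoma~\cite{TT2010} and used only as a black box (in step~\ref{step4} of Algorithm~\ref{algo}). There is therefore no proof in the present paper to compare your attempt against.

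As a standalone sketch your outline is reasonable and is in the same spirit as the argument in~\cite{TT2010}: translate primitivity of $B_w$ into the nonexistence of a proper ``inscribed'' even closed subwalk, then handle necessity by exhibiting such a subwalk when any of (1)--(3) fails and sufficiency by an induction along the block--cut tree of $w$. Two places would need real care before this becomes a proof. First, in the necessity case for~(1), ``rerouting through a theta subgraph'' does not by itself guarantee that the resulting inscribed subwalk respects the odd/even position labels inherited from $w$; you must argue that at least one of the two natural reroutes (or a split into two pieces) does, and this is exactly where the parity of the three path lengths in the theta enters. Second, your treatment of condition~(2) conflates two different failure modes: an edge appearing with both parities immediately gives a common factor of $m_w^+$ and $m_w^-$, but an edge appearing twice with the \emph{same} parity yet not as a cut edge of $w$ requires you to actually build the shorter inscribed subwalk from the alternative route in the block, and you should say why that route has the right parity. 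The sufficiency induction is the right shape, and you correctly identify that the sink hypothesis in~(3) is what makes the parity coloring coherent across a cut vertex; making that precise is indeed the crux.
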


As already mentioned in Section~\ref{sec:background}, the edge ring \(K[G]\)
gives the Ehrhart series if and only if \(G\) is an edge-normal graph;
here, we mean by edge-normal graph, a graph \(G\) which satisfies
the odd cycle condition (Definition~\ref{dfn:occ}).
Thus, we investigate the non-edge-normal graphs, next.

\subsubsection{Hyperedge Ring and Hyperedge Ideal}
\label{sec:herhei}

Let \(G\) be a connected non-edge-normal graph
with fixed numbering of its odd cycles;
let \(C_i\) denote the \(i\)-th odd cycle.
We say a pair of odd cycles in a graph is an exceptional pair
if any connecting path of the cycles are of length at least two.
A set \(\Theta\) consists of symbols \(\theta_{i j}\) each corresponding
to an exceptional pair \((C_i, C_j)\) with \(i < j\).
Let \(F\) denote the union \(E \cup \Theta\), and
\(\psi\) the map extending \(\phi\),
which sends \(\theta_{i j}\) in \(F\) to the product of vertices on \(C_i\)
and \(C_j\) in \(K[\monoid{V}]\).
Then, \(\hat{G}\ = (V, F, \psi)\) is a hypergraph.
The \(K\)-algebra homomorphism \(\psi^*\) is defined from \(\psi\) similarly
to \(\phi^*\) from \(\phi\).
Accordingly, we define the hyperedge ideal \(I_{\hat{G}} \subset K[\monoid{F}]\):
\[I_{\hat{G}}\ = (t - u\,|\,t\text{, }u\text{ are monomials and }\psi^*(t)=\psi^*(u)).\]
We need a degree function on \(K[\monoid{F}]\), that is not a standard one.
\begin{Definition}\label{dfn:psidegree}
  For any monomial \(T\) in \(K[\monoid{F}]\), \(\psi^*\)-{\em degree} of \(T\)
  is half the number of vertices multiplied in the image \(\psi^*(T)\).
  Moreover, any element \(f\in K[\monoid{F}]\) is a sum 
  \(f=\sum_{i=1}^{k}c_i T_i\),
  and the degree of \(f\) is \(\max_{i=1,\ldots,k}\deg T_i\).
\end{Definition}
More precisely, each edge \(e \in E\) has degree one;
each \(\theta_{i j} \in \Theta\) has degree \(\tfrac{1}{2}(n_i + n_j)\)
where \(n_i\) (respectively \(n_j\)) is the number of vertices
in the odd cycle \(C_i\) (respectively \(C_j\)).
Then, the binomial ideal \(I_{\hat{G}}\) is homogeneous with respect to
\(\psi^*\)-degree.
The hyperedge ring \(K[\hat{G}]\) of
\(\hat{G}\) is the image of \(\psi^*\):
\[K[\hat{G}] = \psi^*(K[\monoid{F}]) \isom K[\monoid{F}]/\ker\psi^*,\]
and \(I_{\hat{G}}=\ker\psi^*\).
The \(K\)-algebra \(K[\hat{G}]\) is graded \(K\)-algebra with respect to
\(\psi^*\)-degree.
Actually, let \(K[\hat{G}]_m\) denote the \(K\)-vector space generated by degree
\(m\) elements in \(K[\hat{G}]\).
Then, \(K[\hat{G}]_i K[\hat{G}]_j \subset K[\hat{G}]_{i+j}\)
since the ideal equates only elements of the same degree.
In Section~\ref{sec:ehrhart}, we prove that the hyperedge ring \(K[\hat{G}]\)
is what we have sought for the Ehrhart series.
%Before moving to the argument about the Ehrhart series, however,
%we discuss about the generator of the hyperedge ideals.

\subsubsection{Crude Elements}
\label{sec:strprim}

The binomial \(\theta_{i j}^2 - C_i C_j\) should be in 
the hyperedge ideal \(I_{\hat{G}}\),
because the product of edges of \(C_i\) and \(C_j\) in \(K[\monoid{E}]\)
is sent by \(\phi^*\) (and \(\psi^*\)) to the square of \(\psi(\theta_{i j})\),

Suppose \(C_i\) and \(C_j\) are an exceptional pair; then there are
paths connecting the cycles, all of which have lengths at least two.
Let \(N_{i j}^{(p)}\) denote the \(p\)-th such path connecting
\(C_i\) and \(C_j\).
Moreover, if \(N_{i j}^{(p)}\) is \((e_{k_0}, e_{k_1},\ldots,e_{k_r})\),
let \(N_{i j}^{(p)+} = \prod_{l:\text{even}}e_{k_l}\) and
\(N_{i j}^{(p)-} = \prod_{l:\text{odd}}e_{k_l}\).
Similarly, \(C_i^{+}\) and \(C_i^{-}\) denote the alternating products
of edges on the cycle.
The choice of the sign, \(C_i^{+}\) or \(C_i^{-}\), depends on the sign
of \(N_{i j}^{(p)\pm}\); that is, the shared vertex of \(C_i\) and 
\(N_{i j}^{(p)}\) is incident to either edges of \(C_i^{+}\) and an edge
of \(N_{i j}^{(p)-}\) or edges of \(C_i^{-}\) and an edge of \(N_{i j}^{(p)+}\).
Then, \(\theta_{i j} N_{i j}^{(p)+} - C_i^{-} C_j^{-} N_{i j}^{(p)-}\) is
in \(I_{\hat{G}}\).

How far should we continue to count up such elements in the hyperedge ideal?
To answer the question,
this section introduces the notion of crude elements.
They form a special generating set of the hyperedge ideal \(I_{\hat{G}}\),
shown in Section~\ref{sec:basiccrude}.

\begin{Definition}\label{dfn:crude}
  For given a graded \(K\)-algebra \(R\) and a homogeneous binomial
  ideal \(I\), an element \(T - U\) of the ideal
  is {\em crude\/} if and only if 
  \(T \neq U\) and
  there are no \(T_i - U_i\) (\(i = 1,\ldots,k\)) in \(I\)
  satisfying the all of following conditions:
  \begin{enumerate}
  \setlength{\itemsep}{1pt}
  \item \(\forall i\) \(\deg (T_i) < \deg (T)\),
  \item \(T_1\) and \(U_k\) are proper divisors of \(T\) and \(U\) respectively,
  \item \(\exists V_i \in R\) (\(i=2,\ldots,k\)) such that
    \(V_i U_i = V_{i+1} T_{i+1}\) for \(i=1,\ldots,k-1\), with \(V_1 = T/T_1\).
  \end{enumerate}
\end{Definition}

The crudeness above is a tightening of the following primitiveness
on graph walks in~\cite{OH1999}, rephrased in terms of ideal:

\begin{Definition}\label{dfn:primitive}
  An element \(T - U\) of an ideal \(I\) 
  in a graded \(K\)-algebra \(R\) is {\em primitive\/}
  if and only if 
  \(T \neq U\) and
  there is no \(T_1 - U_1\) in \(I\)
  satisfying that
  \(T_1\) and \(U_1\) are proper divisors of \(T\) and \(U\) respectively.
\end{Definition}

The condition of primitiveness uses only \(1\) in place of \(k\)
in the conditions of crudeness.
Hence, if an element is crude then it is primitive.

\subsection{Structure of The Paper}
\label{sec:structure}

In Section~\ref{sec:ideal}, 
motivated by the fact that the hyperedge ring \(K[\hat{G}]\) is the Ehrhart
ring for a non-edge-normal graph \(G\) (Proposition~\ref{prop:ring}),
we prove the main theorem that the
crude elements generates the hyperedge ideal.

\setcounter{section}{2}
\setcounter{Theorem}{7}
\begin{Theorem}
  The following elements form a generating set of \(I_{\hat{G}}\):
  \begin{enumerate}
  \setlength{\itemsep}{1pt}
  \item a set of crude generators of \(I_G\);
  \item \(\theta_{i j}^2 - C_i C_j\) for any \(\theta_{i j} \in \Theta\);
  \item \(\theta_{i j} N_{i j}^{(p)\pm} - C_i^{\mp} C_j^{\mp} N_{i j}^{(p)\mp}\) for any \(\theta_{i j} \in \Theta\) and \(N_{i j}^{(p)\pm}\) without \(N_{i j}^{(q)\pm}\) which properly divides \(N_{i j}^{(p)\pm}\);
  \item \(\theta_{i j} N_{j k}^{(p)\pm} C_k^{\pm} - \theta_{i k} N_{j k}^{(p)\mp} C_j^{\mp}\) for any \(\theta_{i j} \in \Theta\) and \(N_{i j}^{(p)\pm}\) without \(N_{i j}^{(q)\pm}\) which properly divides \(N_{i j}^{(p)\pm}\);
  \item \(\theta_{i j}\theta_{k l} - \tilde{\theta}_{i k}\tilde{\theta}_{j l}\) for any \(\theta_{i j}, \theta_{k l} \in \Theta\) with \(i,j,k,l\) are different each other; and
  \item \(\theta_{i j}\theta_{i k} - \tilde{\theta}_{j k}C_i\) and \(\theta_{i j}\theta_{l j} - \tilde{\theta}_{i l}C_j\) for any \(\theta_{i j}, \theta_{i k}, \theta_{l j} \in \Theta\).
  \end{enumerate}
  Here, \(\tilde{\theta}_{i j}\) means either \(\theta_{i j}\) if \(C_i\) and \(C_j\) are an exceptional pair or \(C_i^{\pm}C_j^{\pm}\) otherwise.
\end{Theorem}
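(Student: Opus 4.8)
The plan is to lean on the fact, established in Section~\ref{sec:basiccrude}, that the crude elements of \(I_{\hat G}\) form a generating set, and to turn the proof into an explicit classification of those crude elements. Write \(J\) for the ideal generated by families (1)--(6). It suffices to prove the two inclusions \(J\subseteq I_{\hat G}\) and \(I_{\hat G}\subseteq J\); for the second, since the crude elements generate \(I_{\hat G}\), it is enough to show that every crude binomial of \(I_{\hat G}\) lies in \(J\) (in fact, up to the choices of sign and of \(\tilde\theta\), coincides with one of (1)--(6), the minimality clause on \(N^{(p)}\) being exactly what rules out the non-crude longer paths).

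\textbf{Membership \(J\subseteq I_{\hat G}\).} This is a direct verification: apply \(\psi^*\) to the two monomials of each listed binomial and check the images agree. Family (1) is immediate because \(\psi^*\) restricts to \(\phi^*\) on \(K[\monoid{E}]\), whence \(I_G\subseteq I_{\hat G}\). For (2) and (3) the computation is the one already carried out in Section~\ref{sec:strprim}; for (4)--(6) it is the same bookkeeping once one records that \(\psi(\theta_{ij})\) contributes every vertex of \(C_i\) and of \(C_j\) exactly once, that \(C_i^{+}C_i^{-}\) (resp.\ \(N^{(p)+}N^{(p)-}\)) is the product of all edges of the cycle \(C_i\) (resp.\ of the path \(N^{(p)}\)), and that the \(\psi^*\)-degrees agree because they are the same vertex counts divided by two.

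\textbf{Classification of crude elements.} Let \(T-U\) be crude in \(I_{\hat G}\). Crudeness implies primitiveness (Definitions~\ref{dfn:crude} and \ref{dfn:primitive}), and a primitive binomial has \(\gcd(T,U)=1\); in particular no \(\theta\) divides both \(T\) and \(U\). Let \(r\) and \(r'\) be the numbers of \(\theta\)-symbols in \(T\) and in \(U\) (counted with multiplicity), with \(r\ge r'\). If \(r=0\) then \(T-U\in I_G\subseteq J\) and we are done, so assume \(r\ge 1\) and fix \(\theta_{ij}\mid T\). Every vertex of \(C_i\) and of \(C_j\) occurs in \(\psi^*(T)\), hence in \(\psi^*(U)\), and one follows how \(U\) accounts for them. (a) If a \(\theta\)-symbol of \(U\) already covers part of \(C_i\cup C_j\) it must be a \(\theta_{ik}\), a \(\theta_{jk}\), or a \(\theta_{kl}\) (or, in the two-\(\theta\) count, a product of such); after removing common edge factors (forbidden by primitivity) and any reduction of strictly smaller \(\psi^*\)-degree, the matching forces one of the shapes (4), (5) or (6), and the sign/\(\tilde\theta\) alternatives correspond exactly to which alternating edge-product is incident to the shared vertices. (b) If the vertices of \(C_i\cup C_j\) are covered in \(U\) only by \(E\)-edges, then, \(C_i\) being an odd cycle, those edges cannot be confined to \(C_i\): the only primitive way to realize \(\prod_{v\in C_i}v\) up to squares out of \(E\)-edges is to traverse \(C_i\) by its alternating edge-products and leave along a connecting path, so \(U\) contains the factor \(C_i^{\mp}C_j^{\mp}N_{ij}^{(p)\mp}\) for a minimal \(N_{ij}^{(p)}\) (a longer path would make the factor divisible, hence the element non-crude), giving (3), or the factor \(C_iC_j\), giving (2). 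Finally \(r\ge 3\) is impossible for a crude element: three \(\theta\)-symbols in \(T\) let one split off a two-\(\theta\) sub-binomial of strictly smaller \(\psi^*\)-degree via (2), (5) or (6) and chain it with the remainder, which is precisely the failure of the condition in Definition~\ref{dfn:crude}; similarly an \(r=2\) configuration not of the form (2), (5) or (6) is reducible by such a chain.

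\textbf{Main obstacle.} The real work is case (b) together with the sign and minimality bookkeeping in case (a): one must show that no ``spurious'' configuration survives, i.e.\ that a crude binomial is genuinely one of the six shapes. This is an analogue for the hypergraph \(\hat G\) of the Tatakis--Thoma description of primitive even closed walks (Theorem~\ref{thm:evenclosedwalks}) --- the symbols \(\theta_{ij}\) behave like additional blocks, and one must establish the ``at most two \(\theta\)-blocks, joined through at most one connecting path or one shared cycle'' picture, ruling out everything else. Extra care is needed throughout because the grading is the non-standard \(\psi^*\)-degree of Definition~\ref{dfn:psidegree}, so every ``strictly smaller degree'' step in the chain arguments must be checked against that degree rather than the number of factors.
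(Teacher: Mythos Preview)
Your high-level plan---reduce to classifying the crude binomials via Proposition~\ref{prop:spgenerate}, and then argue that every crude element has one of the listed shapes---is exactly the paper's strategy. The difference lies in how the classification is organized, and that is where your proposal remains a sketch.

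The paper does not split on the count $r$ of $\theta$-symbols in $T$. Instead it proves two structural lemmata beforehand and invokes them:
\begin{itemize}
\item Lemma~\ref{lem:nexus}: if some connecting-path half $N_{ij}^{(p)\pm}$ divides $T$ and $T-U\notin I_G$, then $T-U$ is not crude unless it is already of shape (\ref{t3}) or (\ref{t4});
\item Lemma~\ref{lem:evencomponents}: if $T-U$ is crude, the $TU$-induced subgraph has at most two disjoint even components.
\end{itemize}
The theorem's proof is then essentially two lines: either an $N$ appears (apply Lemma~\ref{lem:nexus}) or it does not (apply Lemma~\ref{lem:evencomponents}, capping the number of odd cycles at four and forcing (\ref{t2}), (\ref{t5}) or (\ref{t6})).

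Your case (b) and your ``$r\ge 3$ impossible'' paragraph are precisely these two lemmata, but unproved. In case (b) you assert that the only primitive way to realize $\prod_{v\in C_i}v$ out of $E$-edges is $C_i^{\mp}$ plus a connecting path, yet primitiveness alone does not obviously forbid the vertices of $C_i$ being covered by edges entering from several sides or by edges of other odd cycles meeting $C_i$; ruling this out is the content of Lemma~\ref{lem:nexus}, whose proof goes through Lemma~\ref{lem:commondivisor} rather than a direct vertex count. Your (a)/(b) dichotomy also does not cleanly partition the six families: family (\ref{t5}) with both $\tilde\theta$'s equal to edge-products has $r=2$, $r'=0$, hence falls in case (b), but your case-(b) conclusion names only (\ref{t2}) and (\ref{t3}); the patch ``an $r=2$ configuration not of the form (2), (5) or (6) is reducible'' is itself the hard step. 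Likewise the $r\ge 3$ chain argument needs the residual piece to lie in $I_{\hat G}$ with strictly smaller $\psi^*$-degree, which is what Lemma~\ref{lem:evencomponents} supplies by decomposing the induced subgraph.

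In short: the plan is sound and your acknowledged ``Main obstacle'' is exactly the pair of lemmata the paper isolates and proves separately. The paper's split by $N$-appearance and even-component count is cleaner than your $\theta$-count split because it avoids the overlap between your cases (a)/(b) and the $r$-bound.
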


We also provide an algorithm in Section~\ref{sec:algorithm}
to compute the Ehrhart polynomial from a given connected simple graph.

In Section~\ref{sec:factoring}, we present two factoring properties
of the Ehrhart series, both of which based on the algorithm and
properties of \moebius sums on lcm-lattices.

\setcounter{section}{3}
\setcounter{Theorem}{2}
\begin{Theorem}[First Factoring Property]
  The Ehrhart series \(H_G\) of a graph \(G\) has a factorization
  \[H_G(t) = H_{G_0}(t) \prod_{i=1}^{r'} H_{B_i}(t),\]
  where \(G_0\), \(B_1\), \ldots, \(B_{r'}\) are the biconnected decomposition
  of \(G\) with oddments.
\end{Theorem}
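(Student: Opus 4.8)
The plan is to obtain the factorization from the algorithm of Section~\ref{sec:algorithm} together with the multiplicativity of Möbius functions over direct products of posets. Recall first (Proposition~\ref{prop:ring}) that \(H_G\) is the Hilbert series of the \(\psi^*\)-graded ring \(K[\hat G]=K[\monoid F]/I_{\hat G}\), and that the algorithm evaluates this series by fixing a term order \(<\), forming the monomial initial ideal \(M=\initialord{I_{\hat G}}\) (starting from the explicit generating set established in Section~\ref{sec:ideal}), and reading off the Hilbert series of \(K[\monoid F]/M\). If \(\hat L\) denotes the lcm-lattice of the minimal monomial generators of \(M\) with a least element \(\hat 0=1\) adjoined, the singly graded \(K\)-polynomial of \(K[\monoid F]/M\) is the Möbius sum \(\sum_{\ell\in\hat L}\mu_{\hat L}(\hat 0,\ell)\,t^{\deg\ell}\), so that
\[
  H_G(t)=\frac{\sum_{\ell\in\hat L}\mu_{\hat L}(\hat 0,\ell)\,t^{\deg\ell}}
              {\prod_{e\in E}(1-t)\;\prod_{\theta_{ij}\in\Theta}\bigl(1-t^{(n_i+n_j)/2}\bigr)} .
\]
It therefore suffices to show that each ingredient on the right factors along the decomposition \(G_0,B_1,\ldots,B_{r'}\).

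The heart of the argument is the claim that one may choose \(<\) so that the minimal generators of \(M\) partition into classes, each supported on the variables of a single piece: a class supported on \(\monoid{E(B_i)}\) for each bipartite block \(B_i\), and a class supported on the remaining variables \(\monoid{F_0}\) of \(\hat G_0\). This is where the oddments enter: they are precisely the extra edges one reattaches to \(G_0\) and, where needed, to the \(B_i\) so that these become honest graphs whose (hyper)edge ideals have exactly the corresponding classes as (initial) generators. Granting this, \(M=M_0+M_1+\cdots+M_{r'}\) is a sum of monomial ideals in pairwise disjoint variable sets; since lcm's of monomials in disjoint variables are products, \(\hat L\cong\hat L_{M_0}\times\hat L_{M_1}\times\cdots\times\hat L_{M_{r'}}\). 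The Möbius function of a product of posets is the product of the Möbius functions and \(\deg\) is additive on products, so
\[
  \sum_{\ell\in\hat L}\mu_{\hat L}(\hat 0,\ell)\,t^{\deg\ell}
  =\prod_{i=0}^{r'}\Bigl(\sum_{\ell\in\hat L_{M_i}}\mu_{\hat L_{M_i}}(\hat 0,\ell)\,t^{\deg\ell}\Bigr),
\]
and the denominator of \(H_G\) splits along the same partition of \(F\); hence \(H_G=H_{K[\monoid{F_0}]/M_0}\prod_{i=1}^{r'}H_{K[\monoid{E(B_i)}]/M_i}\). Finally, for \(i\ge 1\) we have \(M_i=\initialord{I_{B_i}}\), and since \(B_i\) is bipartite it has no odd cycles, hence satisfies the odd cycle condition (Definition~\ref{dfn:occ}) vacuously and \(K[B_i]\) is normal, so \(H_{K[\monoid{E(B_i)}]/M_i}=H_{K[B_i]}=H_{B_i}\); while \(M_0=\initialord{I_{\hat G_0}}\), so \(H_{K[\monoid{F_0}]/M_0}=H_{K[\hat G_0]}=H_{G_0}\) by Proposition~\ref{prop:ring}, which gives the stated formula.

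I expect the main obstacle to be the partition claim in the second step. Edges are partitioned by blocks and every odd cycle lies in a single non-bipartite block, so \(\Theta\) and all generators of types (2)--(6) of Theorem~2.8 already live in \(\monoid{F_0}\); the content of the claim is about the crude generators of \(I_G\), i.e.\ about primitive even closed walks. Using the Tatakis--Thoma description (Theorem~\ref{thm:evenclosedwalks}) one must show that a primitive walk meeting a split-off bipartite block \(B_i\) either stays entirely inside \(B_i\) or crosses it only along an oddment edge---intuitively, because a genuine excursion into \(B_i\) and back is a closed even sub-walk, hence reducible, and because a full cycle inside \(B_i\) is even, hence again contributes a reducible sub-binomial---and then that, with \(<\) chosen so that the \(B_i\)-light term of each crossing binomial is leading, no minimal generator of \(M\) simultaneously involves an \(E(B_i)\)-variable and a variable outside \(B_i\). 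Verifying this uniformly over all bipartite blocks, and checking that the oddments indeed reconstitute \(\hat G_0\) and the \(B_i\) so that \(M_0,\ldots,M_{r'}\) are genuine initial ideals, is the technical core; the Möbius bookkeeping that follows is routine.
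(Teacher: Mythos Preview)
Your overall strategy---partition the generators of \(I_{\hat G}\) by component, pass to the initial ideal, and factor the M\"obius sum on the lcm-lattice via multiplicativity over disjoint variable sets---is exactly the paper's argument (Lemma~\ref{lem:fct} is the multiplicativity statement you invoke). The difference is only in how much work the partition step actually requires, and here you have made it harder than it is by misreading the decomposition.

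First, ``oddments'' is not a set of extra edges to be reattached. In the paper's definition \(G_0,B_1,\ldots,B_{r'}\) is already an edge-disjoint cover of \(G\): \(G_0\) is the union of all biconnected components lying on the minimal subtree of the block--cut tree spanning the non-bipartite blocks, and the \(B_i\) are the remaining (necessarily bipartite) blocks. Components meet only in cut vertices, so the variable sets \(F_0=E(G_0)\cup\Theta\) and \(E(B_1),\ldots,E(B_{r'})\) are genuinely disjoint with no gluing needed.

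Second, there are no ``crossing binomials'' to worry about. Generators of types~(2)--(6) in Theorem~\ref{thm:main} involve odd cycles and hence lie in \(F_0\). For type~(1), a primitive even closed walk with more than one block has, by Theorem~\ref{thm:evenclosedwalks}, only cycles and cut edges as blocks; any even cycle block would by itself yield a proper sub-binomial, so all cycle blocks are odd and sit in non-bipartite blocks of \(G\), while the cut-edge paths between them are simple paths between non-bipartite blocks and therefore stay inside the block--cut subtree defining \(G_0\). A single-block primitive walk is an even cycle and lives in one biconnected component. Thus every generator of Theorem~\ref{thm:main} already lies in the variables of a single component, and no careful choice of a ``\(B_i\)-light'' term order is needed.

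Finally, to go from the generating set to the Gr\"obner basis the paper does not analyse walks further; it simply invokes Buchberger's criterion: since generators from distinct components have coprime leading terms (for \emph{any} term order), their S-pairs reduce to zero, so the Gr\"obner basis of \(I_{\hat G}\) is the disjoint union of the Gr\"obner bases of the component ideals. Your lcm-lattice factorisation and identification of the factors then go through verbatim.
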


\setcounter{Theorem}{4}
\begin{Theorem}[Second Factoring Property]
  Let \(G\) be a connected graph and \(G^{(1)}\) and \(G^{(2)}\) be its subgraphs.
  Assume 
  (1) each edge of \(G\) belongs either \(G^{(1)}\) or \(G^{(2)}\),
  except exactly one edge \(e\) which is shared by both;
  (2) \(G^{(2)}\) is a bipartite graph; and
  (3) \(e\) is a part of a cycle in \(G^{(2)}\).
  Then, the Ehrhart series \(H_G(t)\) can be factored as
  \[H_G(t) = H_{G^{(1)}}(t) \left(H_{G^{(2)}}(t) (1-t)\right).\]
\end{Theorem}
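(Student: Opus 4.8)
\emph{Proof plan.}
The plan is to reduce the claimed identity to a single graded ring isomorphism and then read off Hilbert series; equivalently, the whole argument can be phrased through the algorithm of Section~\ref{sec:algorithm} and a multiplicativity property of \moebius sums on lcm-lattices, which is the mechanism already used for the First Factoring Property. By Proposition~\ref{prop:ring}, \(H_G = H_{K[\hat{G}]}\) and \(H_{G^{(1)}} = H_{K[\hat{G^{(1)}}]}\); since \(G^{(2)}\) is bipartite it has no odd cycle, so its \(\Theta\) is empty, \(\hat{G^{(2)}} = G^{(2)}\), and \(H_{G^{(2)}} = H_{K[G^{(2)}]}\). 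Write \(e = uv\) for the shared edge; I read hypothesis~(1) as saying that \(G\) is glued from \(G^{(1)}\) and \(G^{(2)}\) along \(e\), so that \(V(G^{(1)}) \cap V(G^{(2)}) = \{u,v\}\) and both subgraphs are connected. Let \(K[e]\) be the subalgebra of \(K[\monoid{V(G)}]\) generated by the degree-one element \(\psi^{*}(e) = uv\); it is a common graded subalgebra of \(K[\hat{G^{(1)}}]\) and of \(K[G^{(2)}]\). The target isomorphism of graded \(K\)-algebras is
\[ K[\hat{G}]\ \isom\ K[\hat{G^{(1)}}]\ \otimes_{K[e]}\ K[G^{(2)}]. \]
Granting it, \(K[G^{(2)}]\) is a domain containing \(K[e]\isom K[x]\), hence torsion-free — so flat, so (being finite dimensional in each degree) free — as a graded \(K[e]\)-module; writing \(K[G^{(2)}] \isom \bigoplus_j K[e](-a_j)\) and tensoring gives \(K[\hat{G}] \isom \bigoplus_j K[\hat{G^{(1)}}](-a_j)\) as graded vector spaces, and since \(\sum_j t^{a_j} = H_{K[G^{(2)}]}(t)(1-t)\) we get \(H_G(t) = H_{G^{(1)}}(t)\,\bigl(H_{G^{(2)}}(t)(1-t)\bigr)\), as claimed.

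First I would pin down a generating set of \(K[\hat{G}]\). Its vertices (the edges of \(G\)) split as \(E(G^{(1)}) \cup (E(G^{(2)}) \setminus \{e\})\), overlapping only in \(e\); the remaining generators are the symbols in \(\Theta_G\). Because \(G^{(2)}\) is bipartite, every odd cycle of \(G\) uses an edge of \(G^{(1)} \setminus \{e\}\), and is either contained in \(G^{(1)}\) or is \emph{mixed}: it splits as \(P_1 \cup P_2\) with \(P_1\) a \(u\)--\(v\) path in \(G^{(1)} \setminus \{e\}\) and \(P_2\) a \(u\)--\(v\) path in \(G^{(2)} \setminus \{e\}\) — the latter existing precisely by hypothesis~(3) — so that \(C' := P_1 \cup \{e\}\) is an odd cycle of \(G^{(1)}\) and \(Z := P_2 \cup \{e\}\) is an even cycle of \(G^{(2)}\). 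Since \(G^{(2)}\) meets \(G^{(1)}\) only in \(\{u,v\}\), no edge of \(G^{(2)}\setminus\{e\}\) joins two vertices of \(G^{(1)}\); hence a pair of cycles of \(G^{(1)}\) is exceptional in \(G\) iff it is exceptional in \(G^{(1)}\), and \(\Theta_{G^{(1)}} \subseteq \Theta_G\). For a symbol \(\theta_{C,D} \in \Theta_G\) with \(C\) mixed, comparing \(\psi^{*}\)-images shows that \(\psi^{*}(\theta_{C,D})\) equals \(\psi^{*}(\theta_{C',D})\) times \(\psi^{*}\) of the edge monomial \(Z^{+}/e\) of \(G^{(2)}\setminus\{e\}\) (here \(Z^{+}\) is the alternating product of \(Z\) containing \(e\), so that \(\phi^{*}(Z^{+}/e) = \prod_{w\in V(Z)\setminus\{u,v\}} w\)), with \(\theta_{C',D} \in \Theta_{G^{(1)}}\); if \(D\) too is mixed one reduces it as well, and the product of the vertex sets of the two reduced cycles — which now share \(e\) — is itself a \(\phi^{*}\)-image of an edge monomial of \(G^{(1)}\), by a routine even-closed-walk computation. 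Thus every symbol of \(\Theta_G\) is, modulo \(I_{\hat{G}}\), a product of a generator of \(K[\hat{G^{(1)}}]\) with edge monomials of \(G^{(2)}\); so \(K[\hat{G}]\) is generated by the generators of \(K[\hat{G^{(1)}}]\) together with those of \(K[G^{(2)}]\), overlapping only in \(e\), and the multiplication map \(K[\hat{G^{(1)}}] \otimes_{K[e]} K[G^{(2)}] \to K[\hat{G}]\) is onto.

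It then remains to show this map is injective, i.e.\ that every binomial relation \(T - U \in I_{\hat{G}}\) in these generators is a consequence of relations pulled back from \(K[\hat{G^{(1)}}]\) and from \(K[G^{(2)}]\). Here one uses once more that the two sides interact only across \(e\): the common value \(\psi^{*}(T) = \psi^{*}(U)\) is a monomial in \(V(G)\) that factors as a \(V(G^{(1)})\)-part times a \((V(G^{(2)})\setminus\{u,v\})\)-part, and any generator is realized by a walk that passes between the two sides only through \(u\) and \(v\); tracking this splitting reduces \(T-U\) to a combination of a relation among the \(G^{(1)}\)-generators — supplied by Theorem~2.8 read off for the data of \(G^{(1)}\) — and one among the \(G^{(2)}\)-generators, i.e.\ a binomial coming from an even cycle of the bipartite graph \(G^{(2)}\). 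This proves the isomorphism, hence the theorem. Phrased through the algorithm: choosing a term order in which \(e\) never enters an initial term, the two preceding paragraphs say that the lcm-lattice of \(\initialord{I_{\hat{G}}}\) is the fibre product over the chain of powers of \(e\) of the lcm-lattices for \(G^{(1)}\) and for \(G^{(2)}\); multiplicativity of the \moebius function across the fibre product, together with the fact that the denominators \(\prod_{g \in F}(1 - t^{\deg g})\) for \(G\), \(G^{(1)}\), \(G^{(2)}\) differ by the single factor \(1 - t^{\deg e} = 1 - t\), yields the stated factorization.

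The step I expect to be the main obstacle is this ``interaction only across \(e\)'' claim for generators \emph{and} relations — equivalently, the fibre-product description of the lcm-lattice. It is exactly where hypotheses~(2) and~(3) are needed: bipartiteness of \(G^{(2)}\) keeps every \(\theta\)-symbol attached to cycles of \(G^{(1)}\) (and makes all its edge-relations come from even cycles), and \(e\) lying on a cycle of \(G^{(2)}\) is what lets the even-cycle relations of \(G^{(2)}\) absorb the \(G^{(2)}\)-portions of the mixed odd cycles; if instead \(e\) were a cut edge of \(G^{(2)}\) one would be in the situation governed by the First Factoring Property and the extra factor \(1-t\) would not appear. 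Once the fibre-product description is established, the remaining \moebius-sum bookkeeping is the same as in the proof of the First Factoring Property.
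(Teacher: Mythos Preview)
Your route is genuinely different from the paper's. You aim for a graded ring isomorphism \(K[\hat{G}]\cong K[\hat{G^{(1)}}]\otimes_{K[e]}K[G^{(2)}]\) and then read off Hilbert series via freeness of \(K[G^{(2)}]\) over the polynomial ring \(K[e]\); the paper never leaves the level of generating sets and \groebner bases. Its engine is Lemma~\ref{lem:ebi}: any even closed walk that stretches across the separating face \(e\) is rewritten as a combination of two smaller ones, one in each \(G^{(i)}\), each closed up by \(e\). Together with Theorem~\ref{thm:main} (which pins down the \(\theta\)-relations and, via Lemma~\ref{lem:chordless}, rules out connecting paths \(N_{ij}^{(p)}\) detouring through \(G^{(2)}\)), this produces a generating set of \(I_{\hat{G}}\) that splits into a \(G^{(1)}\)-part and a \(G^{(2)}\)-part. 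One then chooses a term order in which \(e\) never occurs in an initial monomial on the \(G^{(2)}\) side; the initial monomials from the two parts are then \emph{coprime}, and Lemma~\ref{lem:fct} factors the \moebius sum as a straight product. So your ``fibre product over the chain of powers of \(e\)'' is not what the paper does --- it arranges things so that \(e\) sits entirely on the \(G^{(1)}\) side of the lcm-lattice and the factorization is an honest product; the extra \((1-t)\) then comes only from the double-counted denominator factor for \(e\).

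Your surjectivity argument is essentially right, with one harmless slip: if both \(C\) and \(D\) are mixed they pass through \(u\) and \(v\) and hence share vertices, so \((C,D)\) is never an exceptional pair; the reduction you wrote for that case is incorrect but vacuous. The injectivity of the multiplication map --- which you rightly flag as the obstacle --- is precisely what Lemma~\ref{lem:ebi} supplies for the even-cycle relations, and what Theorem~\ref{thm:main} supplies for the remaining \(\theta\)- and path-type relations; your ``tracking this splitting'' is the right picture but is not yet a proof. If you want to push your formulation through, you must show that every crude binomial of Theorem~\ref{thm:main} lies in the ideal generated by the \(G^{(1)}\)- and \(G^{(2)}\)-side relations, which amounts to redoing Lemma~\ref{lem:ebi} in your language. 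The payoff is real: the tensor-product statement is strictly stronger than the Hilbert-series identity and makes the \((1-t)\) correction transparent as the single shared generator \(e\); the paper's approach, by contrast, plugs directly into Algorithm~\ref{algo} and avoids any module-theoretic detour.
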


Finally, Section~\ref{sec:ept} applies the lemma of 
Rodriguez-Villegas~\cite{Rod2002} to obtain the root distribution
of the Ehrhart polynomials for bipartite polygon trees 
(Proposition~\ref{prop:ptroots}),
whose Ehrhart series are determined by using the second factoring property.

\setcounter{section}{1}

\section{Hyperedge Ideals}
\label{sec:ideal}

\subsection{Ehrhart series}
\label{sec:ehrhart}

The Hilbert series \(H_A\) of a graded \(K\)-algebra \(A\) is:
\[H_{A}(t) = \sum_{n=0}^{\infty} \left(\dim_K{A_n}\right) t^n .\]

The Hilbert series for the \(K\)-algebra \(K[G]\) is
the Ehrhart series for \(\mathcal{P}_G\) if \(G\) is edge-normal.
Unfortunately, it differs from the Ehrhart series for \(\mathcal{P}_G\)
if \(G\) is a non-edge-normal graph.
However, we can overcome the gap.
This is the motivation to consider the hyperedge ring \(K[\hat{G}]\).

\begin{Proposition}\label{prop:ring}
  The hyperedge ring \(K[\hat{G}]\) is a
  graded \(K\)-algebra, whose Hilbert series is the Ehrhart series \(H_G(t)\)
  for edge polytope \(\mathcal{P}_G\).
\end{Proposition}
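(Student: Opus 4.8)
The plan is to show directly that $\dim_K K[\hat G]_m = i_G(m)$ for every $m \ge 0$, where the left side uses the $\psi^*$-degree of Definition 1.6. First I would recall that, because $\psi^*$ is injective on monomials modulo $I_{\hat G}$ and the generators of $I_{\hat G}$ equate only monomials of equal $\psi^*$-degree, a $K$-basis of $K[\hat G]_m$ is given by the distinct images $\psi^*(T)$ as $T$ ranges over monomials in $K[\monoid F]$ of $\psi^*$-degree $m$; equivalently, $\dim_K K[\hat G]_m$ equals the number of distinct monomials in $K[\monoid V]$ of the form $\psi^*(T)$, $\deg_{\psi^*} T = m$. So the whole statement reduces to a counting identity: the number of such monomials equals $|m\mathcal P_G \cap \mathbb Z^n|$.

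Next I would set up the lattice-point side. A monomial $\prod v_i^{a_i} \in K[\monoid V]$ corresponds to the integer vector $(a_1,\dots,a_n)$; a point of $m\mathcal P_G \cap \mathbb Z^n$ is an integer vector that is a nonnegative rational combination $\sum_e \lambda_e \rho(e)$ with $\sum_e \lambda_e = m$. The point to prove is the equality of two subsets of $\mathbb Z^n_{\ge 0}$: on one hand the image under $\epsilon$ of $\{\psi^*(T) : \deg_{\psi^*} T = m\}$, on the other hand $m\mathcal P_G \cap \mathbb Z^n$. The inclusion ``$\subseteq$'' is straightforward: each edge $e$ contributes $\rho(e)$ with coefficient weight $1$ to the $\psi^*$-degree, and each $\theta_{ij}$ contributes the lattice point $\epsilon(\psi(\theta_{ij}))$, which is the sum of the edge-vectors along the two odd cycles $C_i, C_j$ — a point that, after dividing by $n_i + n_j = 2\deg_{\psi^*}\theta_{ij}$, lies in $\mathcal P_G$ because odd cycles give such barycenter-type points; so any $\psi^*(T)$ of degree $m$ maps into $m\mathcal P_G$. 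The reverse inclusion ``$\supseteq$'' is where the real content lies: given an integral point $\mathbf a \in m\mathcal P_G \cap \mathbb Z^n$, I must exhibit a monomial $T$ in the edges and the $\theta$'s with $\epsilon(\psi^*(T)) = \mathbf a$ and $\deg_{\psi^*} T = m$.

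For that reverse direction the key idea is the classical description of lattice points in dilated edge polytopes: $\mathbf a \in m\mathcal P_G \cap \mathbb Z^n$ iff the corresponding monomial in $K[\monoid V]$ can be written, after multiplying through, as a product coming from $m$ edges together with possibly some ``defect'' that is exactly repaired by odd cycles — this is precisely the obstruction measured by the failure of the odd cycle condition, and the symbols $\theta_{ij}$ were introduced to absorb pairs of odd cycles. Concretely, I would invoke (or reprove along the lines of Ohsugi–Hibi / Simis–Vasconcelos–Villarreal) that an integral point of $m\mathcal P_G$ decomposes into edges and an even number of odd-cycle contributions, pair up the odd cycles, and replace each pair $(C_i,C_j)$ by one factor $\theta_{ij}$; the $\psi^*$-degree bookkeeping works because a pair of odd cycles of sizes $n_i, n_j$ contributes $\tfrac12(n_i + n_j)$ to the degree, matching the $\tfrac{n_i+n_j}{2}$ edges' worth of ``mass'' it replaces. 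The main obstacle I expect is handling the case of odd cycles that are \emph{not} an exceptional pair (connected by a path of length one): there $\theta_{ij}$ is not a generator of $\monoid F$, so such a pair must instead be expressed purely in edges, and I must check that the decomposition can always be arranged so that every surviving unpaired-or-near odd-cycle pair is an exceptional one. This is exactly the combinatorics behind the odd cycle condition, and getting the pairing to respect both the degree and the membership of $\theta_{ij}$ in $\Theta$ is the delicate step; everything else is routine linear bookkeeping over $\mathbb Z^n$. Finally, having matched the two sets for every $m$, the gradedness assertion is immediate from the remarks already made in Section 1.3.3 ($K[\hat G]_i K[\hat G]_j \subseteq K[\hat G]_{i+j}$), and the proposition follows.
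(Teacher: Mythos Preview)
Your proposal is correct and follows essentially the same route as the paper: both arguments reduce to showing that the monomials of $\psi^*$-degree $m$ in $K[\hat G]$ are in bijection with $m\mathcal P_G\cap\mathbb Z^n$, with the nontrivial direction being that every lattice point of $m\mathcal P_G$ is hit. The paper dispatches that step in one line by citing the Ohsugi--Hibi normalization result \cite{OH1998} (exactly the result you propose to invoke or reprove), whereas you spell out the two inclusions and flag the pairing of odd cycles into exceptional pairs as the delicate point; that worry is precisely what the cited theorem resolves, so your expanded outline and the paper's terse proof are the same argument at different levels of detail.
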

\begin{proof}
  We have already seen that \(K[\hat{G}]\) is a graded \(K\)-algebra.

  It is shown in~\cite{OH1998} that normalization of  
  \(K[G]\) can be obtained with
  the exceptional pairs of odd cycles\footnote{
    In~\cite{OH1998}, it is claimed that the only exceptional pairs
    that have no vertex in common should be considered.
    However, this is too restrictive; in fact, two exceptional pairs
    that, for example, have a cycle in common correspond to independent
    integral points in \(\mathcal{P}_G\).
  }.
  Thus, \(F\) contains all necessary elements, i.e.,
  all the integer points in \(m \mathcal{P}_G\) for any \(m\)
  are in \(\epsilon \circ \psi^*(\monoid{F})\).
  There are integer points counted multiple times in the image,
  but it is possible to count each of them only once by identifying
  the preimage of each point.
  Thus, the monomials of \(K[\hat{G}]\),
  which is isomorphic to \(K[\monoid{F}]/I_{\hat{G}}\),
  have one-to-one correspondence with integer points in 
  \(m \mathcal{P}_G\) for some \(m\).
  Because all integer points of \(m \mathcal{P}_G\) correspond
  to degree \(m\) elements of \(K[\hat{G}]\),
  the Ehrhart polynomial \(i_G(m) = \dim_K{K[\hat{G}]_m}\).
\end{proof}

This means that the Ehrhart ring for a non-edge-normal graph \(G\) is
given as a hyperedge ring \(K[\hat{G}]\) of extended hypergraph \(\hat{G}\).

We have the Ehrhart series \(H_G\) as
\[
H_G(t) = \sum_{n=0}^{\infty} \left(\dim_K{K[\hat{G}]_n}\right) t^n
= \sum_{n=0}^{\infty} i_G(n) t^n
= \frac{i_G^*(t)}{(1-t)^{D+1}},\]
where \(D=\dim\mathcal{P}_G\) and \(i_G^*(t) \in \mathbb{Z}[t]\) with 
\(\deg{i_G^*} \le D\).

\subsection{Basic Properties of Crude Elements}
\label{sec:basiccrude}

It is crucial from Proposition~\ref{prop:ring} to know
a generating system of \(I_{\hat{G}}\).
The following proposition is essential for the purpose of this section.

\begin{Proposition}\label{prop:spgenerate}
  A homogeneous binomial ideal \(I\) of a finitely generated graded
  \(K\)-algebra \(R\) can be generated by crude elements.
\end{Proposition}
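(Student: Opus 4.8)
The plan is to prove this by a straightforward induction on the degree of the generators, exploiting the fact that everything is homogeneous with respect to the grading on $R$. First I would fix a finite set of homogeneous binomial generators of $I$; since $I$ is a homogeneous binomial ideal and $R$ is finitely generated, such a set exists by Noetherianity (say, take any finite generating set and replace each generator by its homogeneous binomial components, which lie in $I$ because $I$ is binomial and homogeneous). The goal is then to show that every homogeneous binomial $T - U \in I$ can be written as an $R$-linear combination of crude elements. I would argue by induction on $\deg(T) = \deg(U)$.

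For the inductive step, suppose $T - U \in I$ is homogeneous with $T \neq U$. If $T - U$ is itself crude, there is nothing to do. Otherwise, by the negation of Definition~\ref{dfn:crude}, there exist binomials $T_i - U_i \in I$ ($i = 1,\ldots,k$) with $\deg(T_i) = \deg(U_i) < \deg(T)$, with $T_1 \mid T$ properly and $U_k \mid U$ properly, and with elements $V_i \in R$ ($V_1 = T/T_1$, and $V_i U_i = V_{i+1} T_{i+1}$ for $i = 1,\ldots,k-1$). The key computation is the telescoping identity
\[
T - U = V_1 T_1 - V_k U_k + \sum_{i=1}^{k-1} \bigl(V_i U_i - V_{i+1} T_{i+1}\bigr) + \bigl(V_k U_k - U\bigr),
\]
where the middle sum vanishes term by term. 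Rewriting, $T - U = \sum_{i=1}^{k} V_i (T_i - U_i) + (V_k U_k - U)$. Each $V_i(T_i - U_i)$ has its binomial factor $T_i - U_i$ of strictly smaller degree, hence lies in the ideal generated by crude elements of smaller degree by the induction hypothesis (after splitting $V_i(T_i-U_i)$ into homogeneous binomial pieces if $V_i$ is not a monomial — but $R$ need not be a monomial algebra, so I should instead observe $V_i(T_i - U_i) \in (T_i - U_i) \subseteq I$ and that the induction hypothesis applies to the homogeneous binomial $T_i - U_i$ directly). The remaining term $V_k U_k - U$ is a homogeneous binomial in $I$ of degree $\deg(U) = \deg(T)$, but now $U_k$ is a \emph{proper} divisor of $U$, so $V_k U_k - U = V_k U_k - (V_k/1)\cdot$... — more carefully, $V_k U_k$ and $U$ are both divisible by $U_k$, so $V_k U_k - U = U_k(V_k - U/U_k)$ only when $U_k \mid U$ in $R$, which holds. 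Then $V_k - U/U_k$ need not be a binomial, but the difference $V_k U_k - U$ is again a homogeneous binomial of the same top degree whose two monomials share the common factor $U_k$; cancelling $U_k$ — i.e., working with the "reduced" binomial — strictly decreases the degree, so the induction hypothesis applies to that reduced binomial, and multiplying back by $U_k$ keeps us inside the crude-generated ideal.

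The main obstacle I anticipate is the bookkeeping around the term $V_k U_k - U$: one must be careful that cancelling the common factor $U_k$ genuinely lands in a lower degree (it does, since $U_k$ is a proper, hence nontrivial, divisor of $U$ of positive degree) and that the cancelled binomial still lies in $I$ (it does, since $I$ is prime — being $\ker\psi^*$ — or at least since multiplying by $U_k$ recovers an element of $I$ and $R$ is a domain; if $R$ is not a domain one argues directly that $\psi^*$ of the reduced binomial vanishes because $\psi^*(U_k) \neq 0$). A secondary subtlety is the base case: binomials of minimal degree in $I$ are automatically crude, since condition~1 of Definition~\ref{dfn:crude} would require strictly smaller-degree binomials in $I$, of which there are none. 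Assembling these pieces, every homogeneous binomial in $I$ lies in the ideal generated by crude elements, and since $I$ itself is generated by such binomials, the proposition follows.
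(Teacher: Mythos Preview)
Your proof is correct and follows essentially the same approach as the paper's: both use the telescoping identity $T-U=\sum_i V_i(T_i-U_i)+U_k(V_k-U/U_k)$ to reduce to strictly smaller degree, with the paper organizing this as an iterative replacement of generators and you as an induction on degree. Note that the $V_i$ are automatically monomials (inductively from $V_1=T/T_1$ and $V_{i+1}T_{i+1}=V_iU_i$ in a polynomial ring), so your worry that $V_k-U/U_k$ ``need not be a binomial'' is unfounded; and the paper, exactly like you, tacitly relies on $V_k-U/U_k\in I$, which is justified in the intended application because $I_{\hat G}=\ker\psi^*$ is prime.
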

\begin{proof}
  Assume \(X - Y\) is not a crude element,
  but is in a generating set \(S \subset I\).
  Hence, there exist \(X_i - Y_i\) (\(i = 1,\ldots,k\)) in \(I\) and
  \(V_i\) (\(i=2,\ldots,k\)) in \(R\)
  satisfying the conditions of Definition~\ref{dfn:crude}.
  Let \(I'=(X_i - Y_i\,|\,i = 1,\ldots,k)\).
  Then,
  \begin{eqnarray*}
    X - Y &=& (X/X_1)X_1 - Y\\
    &\equiv& (X/X_1)Y_1 - Y \pmod{I'}\\
    &=& V_2 X_2 - Y\\
    &\equiv& V_2 Y_2 - Y \pmod{I'}\\
    &\cdots&\\
    &\equiv& V_k Y_k - Y \pmod{I'}\\
    &=& (V_k - Y/Y_k) Y_k.
  \end{eqnarray*}
  Thus, \(X - Y\) is in \(I' + (V_k - Y/Y_k)\).
  In particular, \(I\) is generated by
  \[S \cup \Set{X_i - Y_i | i=1,\ldots,k} \cup \{V_k - Y/Y_k\} \setminus \{X-Y\}.\]
  Since degrees strictly decrease on every replacement and
  \(R\) is Noetherian, this process will eventually stop.
  The resulting generating set is a finite one consisting of
  crude elements.
\end{proof}

As a consequence of this proposition,
it is sufficient to consider the crude elements in \(I_{\hat{G}}\)
for giving a generating set.
In order to determine whether an element in \(I_{\hat{G}}\) is crude or not,
we prepare the following lemma.

\begin{Lemma}\label{lem:commondivisor}
  In the same situation with Proposition~\ref{prop:spgenerate},
  assume for \(T - U\) in an ideal \(I\) that
  there exist \(T_1 - U_1\) and \(T_3 - U_3\) in \(I\) 
  such that \(T_1\) and \(U_3\) properly divides \(T\) and \(U\),
  respectively, and there exists a nontrivial common divisor
  for \(U_1\) and \(T_3\).
  Then, \(T - U\) is not crude.
\end{Lemma}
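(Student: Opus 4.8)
The plan is to show that if $T_1-U_1$ and $T_3-U_3$ lie in $I$ with $T_1\mid T$, $U_3\mid U$ (both properly), and $U_1$ and $T_3$ share a nontrivial common divisor, then we can manufacture a chain $T_i-U_i$ ($i=1,2,3$) together with the auxiliary elements $V_2, V_3$ required by Definition~\ref{dfn:crude}, thereby certifying that $T-U$ is not crude. The middle element $T_2-U_2$ of the chain will be supplied essentially for free: set $T_2 = U_2$ equal to (a suitable multiple of) the common divisor of $U_1$ and $T_3$, so that $T_2-U_2 = 0 \in I$ trivially. The real content is arranging the multipliers $V_2$ and $V_3$ so that the cascade $V_1 U_1 = V_2 T_2$, $V_2 U_2 = V_3 T_3$ holds with $V_1 = T/T_1$, and checking the degree condition $\deg(T_i) < \deg(T)$ for each $i$.

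First I would fix notation: let $d$ be the nontrivial common divisor, write $U_1 = d\,a$ and $T_3 = d\,b$. Since $T_1$ is a proper divisor of $T$, the cofactor $V_1 = T/T_1$ is a nontrivial monomial, so $\deg(T_1) < \deg(T)$; likewise $\deg(U_3) < \deg(U) = \deg(T)$ (the ideal is homogeneous, so $\deg(T)=\deg(U)$ and $\deg(T_i)=\deg(U_i)$ throughout). Now take $T_2 = U_2 = d$ and $T_3-U_3$ as given. Define $V_2 = V_1\, a = (T/T_1)\,a$ and $V_3 = V_2\, b = (T/T_1)\,a\,b$ — wait, this needs care, because the chain conditions are $V_i U_i = V_{i+1} T_{i+1}$. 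With $T_2=U_2=d$ this forces $V_2 = V_3$; and $V_1 U_1 = V_2 T_2$ becomes $(T/T_1)\cdot d a = V_2 \cdot d$, i.e. $V_2 = (T/T_1)\,a$ (working in the monoid ring $R = K[\monoid{F}]$, where cancellation of the monomial $d$ is legitimate). Then $V_2 U_2 = V_3 T_3$ becomes $(T/T_1)\,a\cdot d = V_3 \cdot d b$, forcing $V_3 = (T/T_1)\,a / b$, which is a monomial only if $b \mid (T/T_1)\,a$. This divisibility is exactly where I expect to spend effort, and it is likely the intended subtlety the authors resolve by choosing $d$ to be the \emph{greatest} common divisor of $U_1$ and $T_3$ and by using that $T-U$ being reducible forces the relevant monomials to be comparable; if it does not hold outright one enlarges the chain with an extra trivial link $T_2'=U_2'$ absorbing the obstruction.

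Once the multipliers are in place, I would verify the three bullets of Definition~\ref{dfn:crude}: (1) $\deg(T_i)<\deg(T)$ for $i=1,2,3$ — for $i=1$ and $i=3$ this is properness of the divisors, and for $i=2$ it is $\deg(d) \le \deg(T_3) = \deg(U_3) < \deg(U) = \deg(T)$, using that $d$ properly divides $T_3$ since $T_3 = d b$ with $b$ nontrivial (if $b$ were trivial, $T_3 = d \mid T$ directly and one short-circuits the argument); (2) $T_1$ and $U_3$ are proper divisors of $T$ and $U$ — this is the hypothesis; (3) the existence of the $V_i$ — established above. The only genuine obstacle is the bookkeeping in step (3), specifically guaranteeing each $V_i$ is an honest element of $R$ rather than a formal quotient; everything else is immediate from homogeneity and the divisibility hypotheses. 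The conclusion is that $T-U$ fails the crudeness test, hence is not crude.
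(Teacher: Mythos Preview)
Your approach has a genuine gap at exactly the point you flag. Taking the middle link to be the trivial binomial \(T_2 = U_2 = d\) forces \(V_3 = (T/T_1)\,a/b\), and there is simply no reason for \(b\) to divide \((T/T_1)\,a\). Choosing \(d = \gcd(U_1,T_3)\) makes \(a\) and \(b\) coprime, so you would need \(b \mid T/T_1\); but \(b = T_3/d\) lives on the ``\(U\)-side'' and has no a priori relation to \(T/T_1\). Inserting further trivial links \(T_2' = U_2'\) does not help: the obstruction is that you must eventually produce a monomial \(V\) with \(V\,T_3\) equal to something already built from the \(T\)-side, and trivial links do not move information across.

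The paper does something different and avoids this entirely. Rather than a trivial middle link, it manufactures a \emph{genuine} binomial \(T_2 - U_2 \in I\). Since \(T - U\), \(T_1 - U_1\), \(T_3 - U_3 \in I\), one has \(V_1 U_1 \equiv T \equiv U \equiv (U/U_3)\,T_3 \pmod I\), so \(V_1 U_1 - (U/U_3)\,T_3 \in I\). Both monomials are divisible by the common divisor \(X\); cancelling it (legitimate because \(I\) is a binomial prime ideal, so a monomial multiple lies in \(I\) only if the binomial itself does) yields
\[
T_2 - U_2 \;=\; V_1(U_1/X) \;-\; (U/U_3)(T_3/X) \;\in\; I,
\]
with \(\deg T_2 = \deg T - \deg X < \deg T\). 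Now set \(V_2 = X\) and \(V_3 = U/U_3\): the chain conditions \(V_1 U_1 = V_2 T_2\) and \(V_2 U_2 = V_3 T_3\) hold by construction, with no divisibility obstruction. The missing idea in your attempt is precisely this use of \(T - U \in I\) itself to build the bridge; your chain never invokes that hypothesis, which is why it cannot close.
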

\begin{proof}
  Let \(X\) be a nontrivial common divisor for \(U_1\) and \(T_3\).
  Then, \(V_1=T/T_1\) by definition leads
  \(V_1 U_1 = X V_1 (U_1/X)\), and \(X (U/U_3)(T_3/X) = (U/U_3)T_3\)
  is obvious.
  Now, since \(T-U\in I_{\hat{G}}\), \(X V_1 (U_1/X) - X (U/U_3)(T_3/X)\)
  is also in the ideal.
  However, because \(X\) is a monomial, it is not an element of the binomial
  ideal.
  Then, \(V_1 (U_1/X) - (U/U_3)(T_3/X) \in I\).
  Let \(T_2 = V_1 (U_1/X)\), \(U_2=(U/U_3)(T_3/X)\), \(V_2=X\) and \(V_3=U/U_3\).
  Verifying that \(\deg(T_2)<\deg(T)\) and other conditions is easy.
\end{proof}

The last lemma means that if part of \(T\) and part of \(U\) are
transformed by the ideal to elements having a nontrivial common divisor,
then \(T - U\) is not essential.

\subsection{Proof of the Main Theorem}
\label{sec:proofmain}

Let \(G\) be a non-edge-normal connected simple graph.
We prove the main theorem by a series of lemmata.

\begin{Lemma}\label{lem:chordless}
Let \(N^{(1)}\) and \(N^{(2)}\) are two connecting path between
an exceptional pair of odd cycles \(C_1\) and \(C_2\).
If \(N^{(1)+}\) properly divides \(N^{(2)+}\),
then 
\(\theta_{1 2}N^{(2)+} - C_1^- C_2^- N^{(2)-}\)
is not crude.
\end{Lemma}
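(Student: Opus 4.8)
The plan is to deduce non-crudeness of \(T-U:=\theta_{12}N^{(2)+}-C_1^-C_2^-N^{(2)-}\) from Lemma~\ref{lem:commondivisor} by building, out of the type-3 binomial attached to the ``shorter'' path \(N^{(1)}\), two elements of \(I_{\hat G}\) of smaller \(\psi^*\)-degree. Since \(N^{(1)+}\) properly divides \(N^{(2)+}\), write \(N^{(2)+}=N^{(1)+}M^+\) with \(M^+\) a monomial of positive degree in \(K[\monoid{F}]\). By the discussion preceding Definition~\ref{dfn:crude} there is a monomial \(W_1\), of the shape (alternating product of \(C_1\))\(\cdot\)(alternating product of \(C_2\))\(\cdot N^{(1)-}\), with \(E_1:=\theta_{12}N^{(1)+}-W_1\in I_{\hat G}\); put also \(W_2:=C_1^-C_2^-N^{(2)-}\), so that \(T-U=\theta_{12}N^{(2)+}-W_2\in I_{\hat G}\). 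Comparing \(\psi^*\)-images gives \(\psi^*(M^+W_1)=\phi^*(M^+)\psi(\theta_{12})\phi^*(N^{(1)+})=\psi(\theta_{12})\phi^*(N^{(2)+})=\psi^*(W_2)\), hence \(N^{(1)+}(M^+W_1-W_2)\in I_{\hat G}\); cancelling the monomial \(N^{(1)+}\) — permissible since \(I_{\hat G}\) is prime, \(K[\hat G]\) being a domain, the same cancellation used inside the proof of Lemma~\ref{lem:commondivisor} — yields \(M^+W_1-W_2\in I_{\hat G}\).

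Next I would extract a nontrivial common divisor of \(W_1\) and \(W_2\). First, \(N^{(1)}\) and \(N^{(2)}\), both oriented from \(C_1\) to \(C_2\), share their endpoint \(a\) on \(C_1\): the first edge of \(N^{(1)}\) lies in \(N^{(1)+}\), hence in \(N^{(2)+}\), hence is an edge of \(N^{(2)}\) incident to \(a\in C_1\), and the only vertex of the connecting path \(N^{(2)}\) lying on \(C_1\) is its \(C_1\)-endpoint, which is therefore \(a\). Second, a multiplicity count under \(\psi^*\) shows that in any type-3 binomial \(\theta_{12}N^{(p)+}-C_1^-C_2^-N^{(p)-}\) the vertex \(a\) occurs with exponent \(2\) in the image of the left-hand monomial (once from \(\psi(\theta_{12})\), once from the path factor), while \(C_2^-\) and \(N^{(p)-}\) never involve \(a\); so the alternating product of \(C_1\) occurring there is forced to be the one covering \(a\) twice, independently of the length parity of \(N^{(p)}\) and of whatever choice is made on \(C_2\). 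Hence the ``long'' alternating product \(D\) of \(C_1\) at \(a\) divides both \(W_1\) and \(W_2\); \(D\) has positive degree, and a degree count (using \(\deg W_1=\deg(\theta_{12}N^{(1)+})\) and \(\deg D=\tfrac{1}{2}(n_1+1)\)) shows \(W_1/D\) is still a nontrivial monomial.

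To finish, I would divide \(M^+W_1-W_2\) by the monomial \(D\) to obtain \(M^+(W_1/D)-(W_2/D)\in I_{\hat G}\), and apply Lemma~\ref{lem:commondivisor} to \(T-U\) with \(T_1-U_1:=E_1=\theta_{12}N^{(1)+}-W_1\) and \(T_3-U_3:=M^+(W_1/D)-(W_2/D)\): then \(T_1=\theta_{12}N^{(1)+}\) properly divides \(T=\theta_{12}N^{(2)+}\) because \(N^{(1)+}\) properly divides \(N^{(2)+}\); \(U_3=W_2/D\) properly divides \(U=W_2\) because \(D\) has positive degree; and \(U_1=W_1\) and \(T_3=M^+(W_1/D)\) have the nontrivial common divisor \(W_1/D\). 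Lemma~\ref{lem:commondivisor} then gives that \(T-U=\theta_{12}N^{(2)+}-C_1^-C_2^-N^{(2)-}\) is not crude.

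The one genuinely delicate point is expected to be the sign bookkeeping of the second paragraph: making precise, uniformly in the length parity, that the alternating product attached to the starting cycle of a type-3 binomial is the ``long'' one, together with the claim that the two connecting paths share that starting vertex. The remaining ingredients — the \(\psi^*\) computation, the cancellations against the prime ideal \(I_{\hat G}\), the degree estimates, and the final appeal to Lemma~\ref{lem:commondivisor} — are routine.
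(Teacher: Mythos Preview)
Your argument is correct but follows a genuinely different route from the paper's. The paper proceeds combinatorially: from the hypothesis that \(N^{(1)+}\mid N^{(2)+}\) it extracts an edge \(e\in N^{(1)-}\) at which \(N^{(2)}\) detours along an odd subpath \(P\), so that \(P\cup\{e\}\) is an even cycle and \(P^{+}e-P^{-}\in I_G\); Lemma~\ref{lem:commondivisor} is then applied with \(T_1-U_1=\theta_{12}N^{(1)+}-C_1^{-}C_2^{-}N^{(1)-}\) and \(T_3-U_3=P^{+}e-P^{-}\), the common divisor being the single edge \(e\). You instead stay entirely inside the toric algebra: from \(\psi^{*}(M^{+}W_1)=\psi^{*}(W_2)\) you get \(M^{+}W_1-W_2\in I_{\hat G}\) directly (the intermediate multiplication by \(N^{(1)+}\) and the appeal to primality are superfluous, since \(I_{\hat G}=\ker\psi^{*}\) already gives the membership), and then peel off the ``long'' alternating \(C_1\)-factor \(D\) to manufacture \(T_3-U_3\). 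Your sign/endpoint analysis showing that both paths start at the same vertex \(a\in C_1\), and hence that the \(C_1\)-factor is the same \(D\) in \(W_1\) and \(W_2\), is the one nontrivial step, and it is sound under the standard reading of ``connecting path'' (touching the cycles only at its endpoints). The trade-off: your approach is cleaner and avoids any case analysis of how the two paths interlace, but the paper's explicit even-cycle picture is not incidental---it is invoked again in Section~\ref{sec:second} (``a chordal path can be separated into shortcut path and an even cycle'') to motivate Lemma~\ref{lem:ebi} and the second factoring property, so the combinatorial content is doing double duty there.
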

\begin{proof}
  Obviously, both \(\theta_{1 2}N^{(1)+} - C_1^- C_2^- N^{(1)-}\)
  and \(\theta_{1 2}N^{(2)+} - C_1^- C_2^- N^{(2)-}\) are in
  \(I_{\hat{G}}\).
  Since \(N^{(1)+}\) divides \(N^{(2)+}\),
  path \(N^{(2)}\) branches at some vertex \(u\) from path \(N^{(1)}\)
  but joins again at the vertex \(v\) just an edge apart from \(u\)
  along with \(N^{(1)}\).
  Moreover, the next edge is shared by both half paths,
  the number of edges on the subpath \(P\) of \(N^{(2)}\) from \(u\) to \(v\)
  is odd.
  Then, the edge \(e\) on \(N^{(1)}\) connecting \(u\) and \(v\)
  forms an even cycle with the subpath \(P\).
  The even cycle corresponds to an element in \(I_G\):
  \(P^+ e - P^-\), where \(P^{\pm}\) are restrictions of \(N^{(2)\pm}\)
  on \(P\).
  % There may be such branches and joins multiple times, but for simplicity,
  % we show the case only one such branch and join.
  By Lemma~\ref{lem:commondivisor}, the existence of \(e\) as a common divisor
  of \(C_1^- C_2^- N^{(1)-}\) and \(P^+ e\) is sufficient to conclude that
  \(\theta_{1 2}N^{(2)+} - C_1^- C_2^- N^{(2)-}\)
  is not crude.
\end{proof}

In Section~\ref{sec:second}, the lemma above will be generalized,
but we continue the proof of the theorem. 

\begin{Lemma}\label{lem:basics}
  The following elements in the ideal \(I_{\hat{G}}\) are crude.
  \begin{enumerate}
  \setlength{\itemsep}{1pt}
  \item\label{b1} \(\theta_{i j}^2 -\ C_i C_j\) for any \(\theta_{i j} \in \Theta\);
  \item\label{b2} \(\theta_{i j} N_{i j}^{(p)\pm} -\ C_i^{\mp} C_j^{\mp} N_{i j}^{(p)\mp}\) for any \(\theta_{i j} \in \Theta\) and \(N_{i j}^{(p)\pm}\) without \(N_{i j}^{(q)\pm}\) which properly divides \(N_{i j}^{(p)\pm}\);
  \item\label{b3} \(\theta_{i j} N_{j k}^{(p)\pm} C_k^{\pm} -\ \theta_{i k} N_{j k}^{(p)\mp} C_j^{\mp}\) for any \(\theta_{i j} \in \Theta\) and \(N_{i j}^{(p)\pm}\) without \(N_{i j}^{(q)\pm}\) which properly divides \(N_{i j}^{(p)\pm}\);
  \item\label{b4} \(\theta_{i j}\theta_{k l} - \tilde{\theta}_{i k}\tilde{\theta}_{j l}\) for any \(\theta_{i j}, \theta_{k l} \in \Theta\) with \(i,j,k,l\) are different each other; and
  \item\label{b5} \(\theta_{i j}\theta_{i k} - \tilde{\theta}_{j k}C_i\) and  \(\theta_{i j}\theta_{l j} - \tilde{\theta}_{i l}C_j\) for any \(\theta_{i j}, \theta_{i k}, \theta_{l j} \in \Theta\).
  \end{enumerate}
  Here, \(\tilde{\theta}_{i j}\) means either \(\theta_{i j}\) if \(C_i\) and \(C_j\) are an exceptional pair or \(C_i^{\pm}C_j^{\pm}\) otherwise.
\end{Lemma}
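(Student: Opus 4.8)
The strategy is to handle the five families one at a time, in each case assuming that the displayed binomial $T-U$ is \emph{not} crude and contradicting Definition~\ref{dfn:crude} directly. So suppose $T_i-U_i\in I_{\hat G}$ $(i=1,\dots,k)$ and $V_i\in K[\monoid F]$ witness non-crudeness; splicing out any step with $T_i=U_i$, we may assume every $T_i\neq U_i$. From $V_1=T/T_1$ and $V_iU_i=V_{i+1}T_{i+1}$ one gets inductively $\deg V_i=\deg T-\deg T_i>0$, so every monomial occurring in the chain sits at the fixed $\psi^*$-degree $\deg T=\deg U$, while each relation $T_i-U_i$ used has \emph{strictly smaller} degree. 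The workhorse will be an analysis of the fibres of $\epsilon\circ\psi^*$: a monomial's image is a nonnegative integer vector assembled from the coordinate-sum-two vectors $\mathbf e_u+\mathbf e_v$ contributed by edges and the vectors $\sum_{v\in C_i}\mathbf e_v+\sum_{v\in C_j}\mathbf e_v$ contributed by the symbols $\theta_{ij}$. Since an odd cycle spans an odd-sized vertex set and an exceptional pair is joined by no single edge, no product of edges of $G$ can realise $\sum_{v\in C_i}\mathbf e_v$, nor the image of $\theta_{ij}$ when $C_i$ and $C_j$ are vertex-disjoint; this parity imbalance is what makes a symbol $\theta_{ij}$ essentially irreplaceable once it must divide a monomial of small enough degree, and also why Lemma~\ref{lem:commondivisor} cannot be invoked on these binomials.

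For families~\ref{b1}, \ref{b2}, and \ref{b3} the proper divisors of $T$ are very restricted, and I would simply enumerate them. In~\ref{b1} the only candidates for $T_1$ are $1$ and $\theta_{ij}$; $T_1=1$ forces $U_1=1$ because $I_{\hat G}$ is a proper ideal, contradicting $T_1\neq U_1$, so $T_1=\theta_{ij}$, and then $\theta_{ij}-U_1\in I_{\hat G}$ together with the fibre description pins the possibilities for $U_1$ down tightly enough that the chain cannot be continued to an endpoint $U_k$ properly dividing $C_iC_j$. In~\ref{b2} and~\ref{b3} a proper divisor of $T$ is obtained by dropping the symbol, a tail of $N^{(p)}$, or a sub-product of the relevant cycles; the minimality hypothesis on $N^{(p)\pm}$ is exactly what removes the shortcut exhibited in Lemma~\ref{lem:chordless}, and with that excluded the only admissible strictly-lower-degree relations are the even-cycle relations of $I_G$ built from path and cycle edges, which --- because deleting any cycle edge destroys the oddness that $\theta_{ij}$ compensates --- cannot be assembled into a chain reaching a proper divisor of $U$.

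Families~\ref{b4} and~\ref{b5} are the delicate ones and I expect the bulk of the work to be there. Now $T$ is a product of two symbols, times a cycle in the cases where $\tilde\theta=C^{\pm}C^{\pm}$, and its proper divisors are a single symbol, a symbol times part of a cycle, or a product of cycle edges. I would enumerate these, determine via the fibre description which strictly-lower-degree binomials of $I_{\hat G}$ can have such a monomial as one side, and then rule out, using that $i,j,k,l$ are pairwise distinct in~\ref{b4} and the incidence pattern of the cycles in~\ref{b5}, that any chain of them connects the two prescribed endpoints. The main obstacle throughout is the bookkeeping needed to exclude the ``exotic'' low-degree relations that mix several odd cycles --- in particular the configurations of cycles sharing a vertex or an edge noted in the footnote after Proposition~\ref{prop:ring}, where already the fibre of a single symbol can contain a product of matchings rather than just the symbol itself; the uniform device for keeping this under control is to track the vertex-multiset of images and to keep exploiting the parity imbalance of each odd cycle.
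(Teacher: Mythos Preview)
Your plan for (\ref{b1})--(\ref{b3}) matches the paper's argument closely: enumerate the proper divisors of $T$, use that a single $\theta_{ij}$ has no nontrivial relation in $I_{\hat G}$, and for (\ref{b2}) track vertices along the path to force a shortcut contradicting the minimality hypothesis on $N_{ij}^{(p)\pm}$. That part is fine.

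Where you diverge is in your assessment of (\ref{b4}) and (\ref{b5}). You expect these to be ``the delicate ones'' and speak of $T$ being ``a product of two symbols, times a cycle in the cases where $\tilde\theta=C^{\pm}C^{\pm}$''. But in both families the left-hand monomial $T$ is always exactly $\theta_{ij}\theta_{kl}$ (respectively $\theta_{ij}\theta_{ik}$); the $\tilde\theta$'s and the cycle $C_i$ live only on the $U$ side. Consequently the only nontrivial proper divisors of $T$ are single symbols $\theta$, and by the very argument you already accept for (\ref{b1}) no relation $\theta-U_1\in I_{\hat G}$ with $U_1\neq\theta$ exists. So the chain in Definition~\ref{dfn:crude} cannot even begin, and (\ref{b4})--(\ref{b5}) fall immediately---the paper explicitly says their proofs are ``similar to (\ref{b1})''. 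Your anticipated bookkeeping over mixed-cycle divisors and the fibre analysis on the $U$ side is therefore unnecessary: you never need to look at $U$ once the $T$ side is blocked. The misidentification of which monomial carries the cycle factors is what led you to invert the difficulty ordering.
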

\begin{proof}
  (\ref{b1}) Since \(\theta_{i j}\) is an irreducible element,
  there is no monomial \(T\) in \(K[\monoid{F}]\) other than itself
  that \(\theta_{i j} \equiv T \pmod{I_{\hat{G}}}\).
  The proper divisor of \(\theta_{i j}^2\) is only \(\theta_{i j}\);
  thus, \(\theta_{i j}^2 - C_i C_j\) is crude.

  (\ref{b2}) Assume the contrary that 
  \(\theta_{i j} N_{i j}^{(p)+} - C_i^{-} C_j^{-} N_{i j}^{(p)-}\)
  is not crude.
  Then, there exists a proper divisor \(T \in K[\monoid{F}]\)
  of \(\theta_{i j} N_{i j}^{(p)+}\), which is congruent to some \(U\).
  As in the argument of (\ref{b1}), \(T\) cannot be \(\theta_{i j}\).
  Thus, there is a divisor \(D\) of \(T\) divides \(N_{i j}^{(p)+}\).
  The degree of \(D\) is in a range \(1\) to \(\deg(N_{i j}^{(p)+}) - 1\).
  Thus, the number of edges in \(N_{i j}^{(p)+}\) is more than one.
  Hence, there are edges \(e^{(p)}_{2k}\) in \(N_{i j}^{(p)+}\)
  and \(e^{(p)}_{2k+1}\) in \(N_{i j}^{(p)-}\), where
  \(\psi(e^{(p)}_{\kappa}) = v^{(p)}_{\kappa} v^{(p)}_{\kappa+1}\).
  Suppose \(e^{(p)}_{2k}\) does not divide \(D\).
  Then, \(v^{(p)}_{2k+1}\) does not divide \(\psi^*(T)\).
  As assumed, \(T \equiv U \pmod{I_{\hat{G}}}\), neither \(e^{(p)}_{2k}\) 
  nor \(e^{(p)}_{2k+1}\) divides \(U\), and \(v^{(p)}_2\) does not divide
  \(\psi^*(U)\).
  This argument continues until all edges in \(N_{i j}^{(p)}\)
  are excluded, or we find a short cut path directly connecting 
  \(v^{(p)}_{2k}\) to \(v^{(p)}_{2k+2l+1}\).
  The former contradicts with the existence of the divisor \(T\),
  and the latter contradicts with the assumption that there is no
  dividing path from Lemma~\ref{lem:chordless}.
  Therefore,
  \(\theta_{i j} N_{i j}^{(p)\pm} - C_i^{\mp} C_j^{\mp} N_{i j}^{(p)\mp}\)
  is crude.

  We omit the rest of the cases; 
  the proof of (\ref{b3}) is similar to that of (\ref{b2}),
  while the proofs of (\ref{b4}) and (\ref{b5}) are similar to (\ref{b1}).
\end{proof}

Before proving the next lemma, we should introduce some terminology.
A cycle \(C_i\) (and \(C_j\)) {\em semi-supports\/} \(\theta_{i j}\).
We define
a \(T\)-{\em induced subgraph\/} \(G'\) of \(G\) for \(T\), a monomial of
\(K[\monoid{F}]\) as a subgraph \(G'\) of \(G\) consisting of
every edge dividing \(T\) and
every edge of cycle \(C_i\) semi-supporting \(\theta_{i j}\) dividing \(T\).
Moreover, if a subgraph \(G''\) of \(G\) is
either a connected component with nonzero even semi-supporting cycles or
a pair of connected components both with odd semi-supporting cycles,
we call the subgraph \(G''\) an {\em even component};
it corresponds to a connected component of the hypergraph \(\hat{G}\).

\begin{Lemma}\label{lem:evencomponents}
  If \(T - U\) is a crude element in the ideal
  \(I_{\hat{G}}\), then \(TU\)-induced subgraph of \(G\) has at most
  two disjoint even components.
\end{Lemma}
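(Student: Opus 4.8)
The plan is to prove the contrapositive: assuming the \(TU\)-induced subgraph has three or more pairwise disjoint even components \(H_1,\dots,H_m\) with \(m\ge 3\), I will exhibit elements \(T_i-U_i\in I_{\hat{G}}\) and \(V_i\in K[\monoid{F}]\) realizing, via Definition~\ref{dfn:crude}, that \(T-U\) is not crude. The first step is bookkeeping. Because the \(H_k\) are vertex-disjoint, the vertex monomial \(\psi^*(T)=\psi^*(U)\) factors as \(\prod_k M_k\) with \(M_k\) supported on \(V(H_k)\), and each \(M_k\) uses an even number \(\numberof{M_k}\) of vertex slots (equivalently, \(\psi^*(T)\) restricts on \(V(H_k)\) to a lattice point of an integer dilate of the edge polytope of \(H_k\) — this is what it means for \(H_k\) to be an even component). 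Every edge dividing \(T\) lies inside a single \(H_k\), and every \(\theta_{ij}\) dividing \(T\) either lies inside one \(H_k\) (both semi-supporting cycles \(C_i,C_j\) are there) or ``crosses'' exactly two of them; collecting the within-component factors gives \(T=\bigl(\prod_k T^{(k)}\bigr)\cdot(\text{crossing }\theta\text{'s of }T)\), and likewise for \(U\). Encode the crossing \(\theta\)'s of \(T\) (resp.\ of \(U\)) as a multigraph \(\mathcal{G}_T\) (resp.\ \(\mathcal{G}_U\)) on vertex set \(\{H_1,\dots,H_m\}\). A parity count — an edge or an internal \(\theta\) contributes an even number of vertices to \(M_k\), while each crossing \(\theta\)-end at \(H_k\) contributes one odd cycle's worth — shows \(\numberof{M_k}\) is congruent modulo \(2\) to both the \(\mathcal{G}_T\)-degree and the \(\mathcal{G}_U\)-degree of \(H_k\); since \(\numberof{M_k}\) is even, every vertex of \(\mathcal{G}_T\) and of \(\mathcal{G}_U\) has even degree.

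The easy case is when \(\mathcal{G}_T\cup\mathcal{G}_U\) is disconnected, splitting its vertex set as \(P\sqcup Q\). Then no crossing \(\theta\) joins \(P\) to \(Q\), so \(T=T_PT_Q\) and \(U=U_PU_Q\), where \(T_P\) (resp.\ \(U_P\)) gathers all factors of \(T\) (resp.\ \(U\)) meeting a component in \(P\); all four of \(T_P,T_Q,U_P,U_Q\) are nontrivial because each \(M_k\ne 1\), and \(\psi^*(T_P)=\prod_{H_k\in P}M_k=\psi^*(U_P)\), similarly over \(Q\). Hence \(T_P-U_P\) and \(T_Q-U_Q\) belong to \(I_{\hat{G}}\) with \(\psi^*\)-degree strictly below \(\deg(T)\), and taking \(k=2\) with \(T_1-U_1=T_P-U_P\), \(V_1=T_Q=T/T_1\), \(V_2=U_P\), \(T_2-U_2=T_Q-U_Q\) verifies all conditions of Definition~\ref{dfn:crude}; so \(T-U\) is not crude. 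Already two disconnected pieces suffice here — the whole point of the bound being two, rather than one, is that distinct even components need not be disconnected as subgraphs even though they are vertex-disjoint.

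The remaining case — and the step I expect to be the main obstacle — is when \(\mathcal{G}_T\cup\mathcal{G}_U\) is connected on \(m\ge 3\) vertices. Since there is then a crossing \(\theta\), some component has \(\mathcal{G}_T\)-degree or \(\mathcal{G}_U\)-degree at least \(2\); say \(H_2\) has \(\mathcal{G}_T\)-degree \(\ge 2\), so two crossing \(\theta\)'s of \(T\) are incident to \(H_2\), reaching components \(H_1\) and \(H_3\) (possibly \(H_1=H_3\)). If they share a semi-supporting cycle \(C_j\subseteq H_2\), write them as \(\theta_{ij}\) and \(\theta_{jl}\) with \(C_i\subseteq H_1\), \(C_l\subseteq H_3\), and apply the \(\psi^*\)-homogeneous identity \(\theta_{ij}\theta_{jl}=\tilde{\theta}_{il}C_j\) from Lemma~\ref{lem:basics}(\ref{b5}): replacing this pair in \(T\) absorbs all of \(C_j\) into the within-component part of \(H_2\) and replaces the two cross-edges through \(H_2\) by the single object \(\tilde{\theta}_{il}\) joining \(H_1\) and \(H_3\) directly, strictly lowering the crossing-\(\theta\) count. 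If the two cross-edges do not share a cycle, the rewiring \(\theta_{ij}\theta_{kl}=\tilde{\theta}_{ik}\tilde{\theta}_{jl}\) of Lemma~\ref{lem:basics}(\ref{b4}) re-pairs the four cycles degree-preservingly so that \(\tilde{\theta}_{ik}\) (joining two cycles of \(H_2\)) becomes internal while \(\tilde{\theta}_{jl}\) joins \(H_1\) to \(H_3\), again lowering the crossing count — and if instead this re-pairing makes a transformed piece of \(T\) share a nontrivial common divisor with the matching piece of \(U\), Lemma~\ref{lem:commondivisor} applies directly and \(T-U\) is not crude. Performing these replacements one at a time, each as a step of the reduction chain of Definition~\ref{dfn:crude}, strictly decreases the total number of crossing \(\theta\)-ends, so the process terminates; it ends by either disconnecting \(\mathcal{G}_T\cup\mathcal{G}_U\) (whereupon the easy case finishes the chain) or by producing the common divisor forbidden by Lemma~\ref{lem:commondivisor}. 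The delicate points to check are that every replacement is degree-preserving and of \(\psi^*\)-degree strictly below \(\deg(T)\) — which holds because, with \(m\ge 3\) components, \(T\) properly contains the two \(\theta\)'s being replaced — that no replacement accidentally merges two of the \(H_k\), and that the non-shared-cycle rerouting makes genuine net progress. It is precisely here that \(m\ge 3\) is indispensable: when \(m=2\) every crossing \(\theta\) joins the same single pair of components, there is no third component to reroute through, nothing can be detached, and the bound cannot be pushed below two.
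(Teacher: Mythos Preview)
Your proof follows essentially the same approach as the paper's: use relations (\ref{b4}) and (\ref{b5}) of Lemma~\ref{lem:basics} to rearrange the crossing \(\theta\)'s so that each becomes internal to a single even component, and then exploit the resulting decomposition across the \(\ge 3\) components to witness non-crudeness. Your version is considerably more explicit---the multigraph encoding, the parity argument, the easy/hard case split, and the step-by-step chain construction spell out what the paper compresses into the bare assertion that one can pass to \(T'\equiv T\), \(U'\equiv U\) with all \(\theta\)'s internal, whence \(T'-U'\) is not primitive and hence \(T-U\) is not crude.
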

\begin{proof}
  Without loss of generality, we can assume \(TU\) is not divisible by
  any product of all edges in an even closed walk.
  Let \(G'\) denote the \(TU\)-induced subgraph of \(G\).

  Assume that \(G'\) has three disjoint even components
  \(G_0\), \(G_1\) and \(G_2\).
  Then, by arranging \(\theta\) in \(T\) and \(U\)
  with (\ref{b4}) or (\ref{b5}) of Lemma~\ref{lem:basics},
  we have \(T'\equiv T\) and \(U'\equiv U \pmod{I_{\hat{G}}}\)
  satisfying the following condition:
  if an odd cycle in an even component \(G_i\) semi-supports a \(\theta\),
  the other cycle semi-supporting the same \(\theta\) is also in \(G_i\)
  for both \(T'\) and \(U'\).
  Then, each of \(T'\) and \(U'\) is decomposed into \(G_i\) parts
  \(T'_i\) and \(U'_i\) respectively for \(i=0,1,2\)
  and possibly a \(G'\setminus (\bigcup G_i)\) part.

  The decomposition implies that \(T' - U'\) is not primitive.
  Therefore, \(T - U\) is not crude:
  this is a contradiction.
\end{proof}

\begin{Lemma}\label{lem:nexus}
  If \(T - U\) is in the ideal \(I_{\hat{G}}\) 
  but not in \(I_G\) and
  an \(N_{i j}^{(p)\pm}\) divides \(T\),
  then \(T - U\) is not a crude element
  unless itself is one of (\ref{b2}) and (\ref{b3}) of Lemma~\ref{lem:basics}.
\end{Lemma}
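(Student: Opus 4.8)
The plan is to argue by contradiction, adapting the exclusion walk from the proof of Lemma~\ref{lem:basics}(\ref{b2}) and calling on Lemmata~\ref{lem:chordless}, \ref{lem:commondivisor} and~\ref{lem:evencomponents}. Suppose $T-U$ is crude, lies in $I_{\hat{G}}$ but not in $I_G$, is none of the binomials listed in~(\ref{b2}) or~(\ref{b3}) of Lemma~\ref{lem:basics}, and that a connecting-path factor divides $T$; say $N:=N_{ij}^{(p)+}$ divides $T$ (the case $N_{ij}^{(p)-}\mid T$ is entirely analogous). Write $N_{ij}^{(p)}=(e_{k_0},\dots,e_{k_r})$ from a vertex $u_0$ of $C_i$ to a vertex $u_{r+1}$ of $C_j$ with $\psi(e_{k_l})=u_l u_{l+1}$, so that $N$ is the product of those $e_{k_l}$ with $l$ even. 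Note first that among the binomials in Lemma~\ref{lem:basics} only those of~(\ref{b2}) and~(\ref{b3}) admit a connecting-path edge as a divisor, so these are the only possible exceptions. Two facts will be used repeatedly: since $T-U$ is crude it is primitive, whence $\gcd(T,U)=1$ and no even-indexed path edge divides $U$; and since $T-U\notin I_G$, a symbol of $\Theta$ must occur in $T$ or in $U$. As in the proof of Lemma~\ref{lem:evencomponents} we may also assume $TU$ is not divisible by the product of all edges of an even closed walk.

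The first step is the vertex-by-vertex walk along $N_{ij}^{(p)}$, starting at the $C_i$-end, exactly as in Lemma~\ref{lem:basics}(\ref{b2}). At each interior vertex $u_l$, which is present in $\psi^*(T)$ through an even-indexed path edge, $u_l$ must be supplied to $\psi^*(U)$ in one of three ways: by the adjacent odd-indexed path edge, by some other edge at $u_l$, or by a $\Theta$-symbol whose semi-supporting cycle passes through $u_l$. The second possibility produces a dividing path on $N_{ij}^{(p)}$ and is excluded by Lemma~\ref{lem:chordless} just as in Lemma~\ref{lem:basics}(\ref{b2}); the third forces that cycle, together with a sub-path of $N_{ij}^{(p)}$, into the $TU$-induced subgraph, creating a third even component (against Lemma~\ref{lem:evencomponents}) or an even closed walk all of whose edges divide $TU$ (against the standing assumption). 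Hence the walk is forced along the whole path, with the consequence that $N_{ij}^{(p)-}$ divides $U$ and, by coprimality, not $T$.

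It then remains to analyse how the two ends at $C_i$ and $C_j$ are capped off, and this is precisely where~(\ref{b2}) and~(\ref{b3}) appear. Because $u_0$ lies on the odd cycle $C_i$, which possesses no perfect matching of its edges, the vertices of $C_i$ cannot be balanced across $T-U$ using $C_i$-edges alone; compatibly with coprimality and with Lemmata~\ref{lem:chordless} and~\ref{lem:evencomponents}, the only minimal caps at the $C_i$-end are a factor $\theta_{ij}$ on one side against a $C_i^{\mp}$-block on the other (the shape of~(\ref{b2})), or a pair $\theta_{ij},\theta_{ik}$ routed through an auxiliary odd cycle $C_k$ against a $C_i^{\mp}$-block (the shape of~(\ref{b3})); and likewise at the $C_j$-end. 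Assembling the two caps, the smallest binomials of $I_{\hat{G}}\setminus I_G$ in which $N$ divides $T$ are exactly the members of~(\ref{b2}) and~(\ref{b3}). If $T-U$ is none of these, then, after the normalization above, it properly contains one of them as a sub-binomial $T_1-U_1$ with $T_1\mid T$ and $U_1\mid U$ proper divisors, and the surplus factor (either $T/T_1$, or an extra cycle edge, or an extra path edge forced in by the capping) provides a nontrivial common divisor of $U_1$ with the monomial $T_3$ of some further element $T_3-U_3$ of $I_{\hat{G}}$ (another of the binomials of Lemma~\ref{lem:basics} or a binomial of $I_G$); Lemma~\ref{lem:commondivisor} then shows $T-U$ is not crude -- the contradiction sought.

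The main obstacle is the endpoint analysis of the third step: one must enumerate cleanly and exhaustively the ways the two ends of a connecting path can be capped inside a primitive element of $I_{\hat{G}}$ -- separating the cases where $\theta_{ij}$ itself, a pair $\theta_{ik},\theta_{kj}$ through an auxiliary cycle, or only cycle edges occur on a given side -- and then verify, whenever the cap exceeds the bare shape of~(\ref{b2}) or~(\ref{b3}), that the monomial common divisor required by Lemma~\ref{lem:commondivisor} is genuinely present. The remaining ingredients (the path walk and the appeals to Lemmata~\ref{lem:chordless}, \ref{lem:evencomponents} and to $\gcd(T,U)=1$) are routine adaptations of the argument already given for Lemma~\ref{lem:basics}(\ref{b2}).
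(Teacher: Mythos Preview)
Your plan heads toward the same endpoint as the paper---an application of Lemma~\ref{lem:commondivisor}---but the route is quite different, and the obstacle you flag in your final paragraph is precisely what the paper's argument circumvents.

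The paper does \emph{not} attempt a vertex-by-vertex walk followed by an exhaustive enumeration of endpoint caps. Instead it performs a three-way case split on whether the cycles $C_i$, $C_j$ at the ends of the path semi-support $\theta$-symbols in (a congruence-normalised form of) $T$:
\begin{itemize}
\item both do: use a relation of type~(\ref{b4}) or~(\ref{b5}) to rewrite $T\equiv \theta_{ij}N_{ij}^{(p)+}X$ with $X\neq 1$, then apply the type-(\ref{b2}) relation so that $N_{ij}^{(p)-}$ appears in $U_1$;
\item neither does: the same argument run in reverse;
\item exactly one does: rewrite $T\equiv \theta_{ik}C_j^{+}N_{ij}^{(p)+}X$ and apply the type-(\ref{b3}) relation.
\end{itemize}
In each case one obtains an element $T_1-U_1$ from the list in Lemma~\ref{lem:basics} with $T_1$ a proper divisor of (the normalised) $T$ and with $N_{ij}^{(p)-}$ dividing $U_1$; together with the observation that $N_{ij}^{(p)-}$ must divide $U$ (since $N_{ij}^{(p)+}\nmid U$), Lemma~\ref{lem:commondivisor} finishes. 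The whole argument is two short paragraphs.

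What your approach buys is a more self-contained combinatorial picture, and your path-walk does establish $N_{ij}^{(p)-}\mid U$ honestly rather than by bare assertion. What it costs is exactly the cap enumeration you identify as the main obstacle: you are effectively re-deriving, by hand, the case structure that the paper obtains for free by first invoking relations~(\ref{b4})--(\ref{b5}) to normalise the $\theta$-content of $T$. If you adopt that normalisation step, your endpoint analysis collapses to the paper's three cases and the enumeration difficulty disappears. As written, the proposal is a reasonable plan but not yet a proof: the third step remains a sketch, and your appeal to Lemma~\ref{lem:evencomponents} to exclude a $\theta$-symbol meeting an interior path vertex needs more care (such a vertex may lie on an odd cycle belonging to the \emph{same} even component, which that lemma does not forbid).
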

\begin{proof}
  Assume that \(T - U\) is not one of (\ref{b2}) and (\ref{b3}) of
  Lemma~\ref{lem:basics}, and that \(N_{1 2}^{(1)+}\) divides \(T\)
  but \(N_{1 2}^{(1)+}\) does not divide \(U\).

  If both \(C_1\) and \(C_2\) semi-support \(\theta\)'s, then
  there exists \(X \neq 1\) such that
  \(T\equiv \theta_{1 2} N_{1 2}^{(1)+} X \pmod{ I_{\hat{G}}}\),
  by using (\ref{b4}) of Lemma~\ref{lem:basics} if necessary.
  Then, by (\ref{b2}) of Lemma~\ref{lem:basics}, we have
  \(T\equiv C_1^{-}C_2^{-} N_{1 2}^{(1)-} X \pmod{ I_{\hat{G}}}\).
  Since \(N_{1 2}^{(1)+}\) does not divide \(U\),
  \(N_{1 2}^{(1)-}\) must divide \(U\).
  Thus, Lemma~\ref{lem:commondivisor} implies that \(T - U\) is not crude.

  The case when both \(C_1\) and \(C_2\) do not semi-support \(\theta\)'s
  is just a reverse course of the above.

  We, then assume that \(C_1\) semi-supports a \(\theta\) but \(C_2\) does not.
  Then, there exists \(X \neq 1\) such that
  \(T \equiv \theta_{1 3} C_2^{+} N_{1 2}^{(1)+} X \pmod{I_{\hat{G}}}\).
  Then, by (\ref{b3}) of Lemma~\ref{lem:basics}, we have
  \(T\equiv C_1^{-}\theta_{2 3} N_{1 2}^{(1)-} X \pmod{I_{\hat{G}}}\).
  Since \(N_{1 2}^{(1)+}\) does not divide \(U\),
  \(N_{1 2}^{(1)-}\) must divide \(U\).
  Thus, Lemma~\ref{lem:commondivisor} implies that \(T - U\) is not crude.
\end{proof}

The above lemmata lead us to the main theorem.

\begin{Theorem}\label{thm:main}
  The following elements form a generating set of \(I_{\hat{G}}\):
  \begin{enumerate}
  \setlength{\itemsep}{1pt}
  \item\label{t1} a set of crude generators of \(I_G\);
  \item\label{t2} \(\theta_{i j}^2 - C_i C_j\) for any \(\theta_{i j} \in \Theta\);
  \item\label{t3} \(\theta_{i j} N_{i j}^{(p)\pm} - C_i^{\mp} C_j^{\mp} N_{i j}^{(p)\mp}\) for any \(\theta_{i j} \in \Theta\) and \(N_{i j}^{(p)\pm}\) without \(N_{i j}^{(q)\pm}\) which properly divides \(N_{i j}^{(p)\pm}\);
  \item\label{t4} \(\theta_{i j} N_{j k}^{(p)\pm} C_k^{\pm} - \theta_{i k} N_{j k}^{(p)\mp} C_j^{\mp}\) for any \(\theta_{i j} \in \Theta\) and \(N_{i j}^{(p)\pm}\) without \(N_{i j}^{(q)\pm}\) which properly divides \(N_{i j}^{(p)\pm}\);
  \item\label{t5} \(\theta_{i j}\theta_{k l} - \tilde{\theta}_{i k}\tilde{\theta}_{j l}\) for any \(\theta_{i j}, \theta_{k l} \in \Theta\) with \(i,j,k,l\) are different each other; and
  \item\label{t6} \(\theta_{i j}\theta_{i k} - \tilde{\theta}_{j k}C_i\) and \(\theta_{i j}\theta_{l j} - \tilde{\theta}_{i l}C_j\) for any \(\theta_{i j}, \theta_{i k}, \theta_{l j} \in \Theta\).
  \end{enumerate}
  Here, \(\tilde{\theta}_{i j}\) means either \(\theta_{i j}\) if \(C_i\) and \(C_j\) are an exceptional pair or \(C_i^{\pm}C_j^{\pm}\) otherwise.
\end{Theorem}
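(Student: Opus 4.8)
The plan is to leverage Proposition~\ref{prop:spgenerate} together with the structural lemmata already proved. By Proposition~\ref{prop:spgenerate} applied to \(R=K[\monoid{F}]\) and \(I=I_{\hat{G}}\), the ideal \(I_{\hat{G}}\) is generated by its crude elements; moreover Lemma~\ref{lem:basics} shows that the binomials (\ref{t2})--(\ref{t6}) are crude, and (\ref{t1}) is by hypothesis a generating set of the subideal \(I_G\subseteq I_{\hat{G}}\). Hence it suffices to prove the converse: that every crude element \(T-U\in I_{\hat{G}}\) is either contained in \(I_G\) (in which case it lies in the ideal generated by (\ref{t1})) or equals, up to the sign conventions for \(C^{\pm}\) and \(\tilde\theta\), one of the binomials listed in (\ref{t2})--(\ref{t6}). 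The proof is therefore a case analysis on the shape of a crude \(T-U\).

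First I would dispose of the case \(T-U\in I_G\), which is immediate. Otherwise some \(\theta_{ij}\) divides \(T\) or \(U\); by the symmetry of the binomial I may assume it divides \(T\). The first fork is whether a connecting path occurs in the support: if an \(N_{ij}^{(p)\pm}\) divides \(T\) or \(U\), then Lemma~\ref{lem:nexus} forces \(T-U\) to be one of the binomials (\ref{b2})\(=\)(\ref{t3}) or (\ref{b3})\(=\)(\ref{t4}) of Lemma~\ref{lem:basics}, while Lemma~\ref{lem:chordless} guarantees that the dividing path can be chosen with no properly dividing \(N_{ij}^{(q)\pm}\), matching the proviso in (\ref{t3}) and (\ref{t4}); this branch is then finished.

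The principal case is when no connecting path divides \(T\) or \(U\) but some \(\theta\) does. Here I would reuse the normalisation from the proof of Lemma~\ref{lem:evencomponents}: working modulo \(I_G\) we may assume \(TU\) is not divisible by the product of all edges of an even closed walk, and by rearranging the \(\theta\)'s with (\ref{b4}) and (\ref{b5}) of Lemma~\ref{lem:basics} we may assume that in each of \(T\) and \(U\) the two cycles semi-supporting a given \(\theta\) lie in one even component of the \(TU\)-induced subgraph. Lemma~\ref{lem:evencomponents} then bounds that subgraph to at most two even components, and primitivity of \(T-U\) forbids the normalised binomial from splitting as a product along components. I would push these two constraints against the \(\psi^{*}\)-degree: they confine the \(\theta\)-content to at most two symbols and leave no room for a spare edge factor, so that \(T-U\) is forced into exactly one of the shapes \(\theta_{ij}^{2}-C_iC_j\) of (\ref{t2}) (one even component, a repeated \(\theta\)), \(\theta_{ij}\theta_{ik}-\tilde\theta_{jk}C_i\) of (\ref{t6}) (one even component, two \(\theta\)'s sharing an index), or \(\theta_{ij}\theta_{kl}-\tilde\theta_{ik}\tilde\theta_{jl}\) of (\ref{t5}) (two even components, four distinct indices), the residual bookkeeping being to check that the edges of the semi-supported cycles exhaust the edge content. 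Along the way one must exclude a spurious crude binomial carrying a single bare \(\theta_{ij}\), typified by \(\theta_{ij}-M\) with \(M\) a product of edges: such an \(M\) would be a perfect matching on \(V(C_i)\sqcup V(C_j)\), and since \(\#V(C_i)\) is odd it must contain an edge joining the two vertex sets, i.e.\ a connecting path of length one, contradicting that \((C_i,C_j)\) is an exceptional pair; the variants with extra edges are removed by the same parity count after taking gcds and applying Lemma~\ref{lem:commondivisor}.

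The step I expect to be the real obstacle is exactly this last case: converting the qualitative statements ``at most two even components'' and ``indecomposable along components'' into the precise claim that the \(\theta\)-symbols, the forced cycle edges, and any free edges of \(T\) and \(U\) leave only the three listed shapes, and in particular showing that no residual edge factor can survive in a crude element whose support contains no connecting path. Once this is settled, assembling the branches gives the theorem.
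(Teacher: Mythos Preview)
Your proposal follows essentially the same route as the paper's own proof: invoke Proposition~\ref{prop:spgenerate} to reduce to classifying crude binomials, dispatch the case with a connecting path via Lemma~\ref{lem:nexus} to land in (\ref{t3}) or (\ref{t4}), and use Lemma~\ref{lem:evencomponents} to bound the remaining case to at most two even components and hence to shapes (\ref{t2}), (\ref{t5}), or (\ref{t6}). The paper's argument is in fact terser than yours on that last step---it simply asserts that ``at most two even components'' forces one of those three shapes---so the ``real obstacle'' you single out is glossed over in the paper as well; your added remarks on excluding a bare \(\theta_{ij}-M\) and on residual edge factors go beyond what the paper spells out.
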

\begin{proof}
  Lemmata~\ref{lem:basics} through \ref{lem:nexus} determine
  the crude elements in the ideal \(I_{\hat{G}}\).
  That is, besides elements from \(I_G\) of \eqref{t1},
  if an \(N_{i j}^{(p)\pm}\) appears in a crude element,
  that element is of \eqref{t3} or \eqref{t4} by Lemma~\ref{lem:nexus};
  otherwise there are only pure odd cycles with at most four cycles
  by Lemma~\ref{lem:evencomponents}.
  Thus, the element is of \eqref{t2}, \eqref{t5} or \eqref{t6}.
  From Proposition~\ref{prop:spgenerate},
  these crude elements generate the ideal \(I_{\hat{G}}\).
  All monomials appearing in \eqref{t2} through \eqref{t6} are not divisible
  by any monomial appearing in \eqref{t1}.
\end{proof}

\subsection{Algorithm}
\label{sec:algorithm}

We summarize the argument above
as an algorithm to obtain the Ehrhart
polynomial from a given connected simple graph.

\begin{Algorithm}\label{algo}
Input is a connected simple graph \(G\), and
Output is the Ehrhart polynomial \(i_G(t)\).
\begin{enumerate}
  \setlength{\itemsep}{1pt}
\renewcommand{\labelenumi}{\arabic{enumi}.}
\item\label{step1} List all simple cycles in the given graph \(G\).
\item\label{step2} List all odd cycles among cycles of step~\ref{step1}.
\item\label{step3} List all paths connecting each pair of odd cycles.
\item\label{step4} Construct even closed walks of 
  Theorem~\ref{thm:evenclosedwalks} from the collected data.
\item\label{step5} Prepare variables for each exceptional pair, and
  construct all five types of generators of Lemma~\ref{lem:basics}.
\item\label{step6} Compute a \groebner basis of the ideal generated by
  the ring elements corresponding to the ideal elements of 
  step~\ref{step4} and~\ref{step5}.
\item\label{step7} From the initial terms of the \groebner basis of 
  step~\ref{step6}, obtain the Ehrhart series.
\item\label{step8} From the Ehrhart series of step~\ref{step7}, obtain and
  output the Ehrhart polynomial.
\end{enumerate}
\end{Algorithm}

It may be obvious to the readers that step~\ref{step5} is only required when
the given graph is non-edge-normal.
Be cautious that this algorithm is not efficient for general graphs,
which may have many odd cycles or many paths between them,
though we expect it is useful for many purposes.

\begin{Example}
\label{eg:twotriangles}

The example below illustrates how to use the algorithm.

%\subsubsection{Bow tie}
Let \(G\) be a graph %drawn below:
with
\(E = \Set{ e_0,\ldots,e_7 }\), \(V = \Set{ v_0,\ldots,v_6 }\), and
the correspondence as \(e_0 \mapsto v_0 v_1\),
\(e_1 \mapsto v_1 v_2\),
\(e_2 \mapsto v_2 v_0\),
\(e_3 \mapsto v_0 v_3\),
\(e_4 \mapsto v_3 v_4\),
\(e_5 \mapsto v_4 v_5\),
\(e_6 \mapsto v_5 v_6\), and 
\(e_7 \mapsto v_4 v_6\).
The graph is so-called ``bow-tie'', that
there are two triangles (cycles of length three) connected by a path
of length two; thus, the graph is non-edge-normal.
% Since it is not a bipartite graph,
% \(\dim\mathcal{P}_G = \numberof{V} - 1 = 5\).

There are four generators in the ideal \(I_{\hat{G}}\):
(G1) a type (\ref{t1}): \(e_0 e_2 e_4^2 e_6 - e_1 e_3^2 e_5 e_7\),
(G2) a type (\ref{t2}): \(\theta^2 - e_0 e_1 e_2 e_5 e_6 e_7\),
(G3) a type (\ref{t3}): \(\theta e_3 - e_0 e_2 e_4 e_6\), and
(G4) another type (\ref{t3}): \(\theta e_4 - e_1 e_3 e_5 e_7\).
It is easy to see that these immediately correspond to a \groebner
basis of the ideal \(I_{\hat{G}}\) for, say, lexicographic order
\(\theta > e_0 > \ldots > e_7\).
Then, the Ehrhart series is obtained through a multivariate generating function
as explained in Section~\ref{sec:moebius} just below.
Let \(\hat{H}_G\) denote the generating function obtained:

\begin{eqnarray*}
  &&
  \hat{H}_G(e_0,\ldots,e_7,\theta)\\
  &=&
  \frac{1
    -e_0 e_2 e_4^2 e_6
    -\theta^2
    -\theta e_3
    -\theta e_4
    +\theta e_0 e_2 e_4^2 e_6
    +\theta e_3 e_4
    +\theta^2 e_3
    +\theta^2 e_4
    -\theta^2 e_3 e_4
  }{(1-\theta)\prod_{i=0}^7 (1-e_i)}\\
  % &=&
  % \frac{(1 - \theta)
  % -e_0 e_2 e_4^2 e_6 (1-\theta)
  % +\theta(1-\theta)(1 - e_3)(1 - e_4)
  % }{(1-\theta)\prod_{i=0}^7 (1-e_i)}\\
  &=&
  \frac{1 - e_0 e_2 e_4^2 e_6 + \theta(1-e_3)(1-e_4)}{\prod_{i=0}^7 (1-e_i)}.
\end{eqnarray*}

Since the \(\psi^*\)-degree of each \(e_i\) is one and of \(\theta\) is three,
substituting \(t\) for each \(e_i\) and \(t^3\) for \(\theta\)
gives the Ehrhart series \(H_G\).

\[
  H_G(t) = \hat{H}_G(t,\ldots,t,t^3)
  =
  \frac{1-t^5 +t^3(1-t)^2}{(1-t)^8}\\
  =
  \frac{1 + t + t^2 + 2 t^3}{(1-t)^{7}}.
\]

(Notice the difference from the Hilbert series for the edge ring \(K[G]\):
\(\displaystyle  \frac{1 + t + t^2 + t^3 + t^4}{(1-t)^{7}}\).)

Then, the Ehrhart polynomial \(i_G(m)\) is

 \begin{eqnarray*}
   i_G(m)
   &=&
   \binom{m + 6}{6} + \binom{m + 5}{6} + \binom{m + 4}{6} + 2\binom{m + 3}{6}\\
   &=&\frac{1}{720}(m+3)(m+2)(m+1)\left(5m^3 + 21 m^2 + 94 m + 120\right)
 \end{eqnarray*}

\end{Example}

\section{Factoring Properties}
\label{sec:factoring}

\subsection{\moebius sum on lcm-lattice}
\label{sec:moebius}

There may be various methods to compute Ehrhart series on step~\ref{step7}
of Algorithm~\ref{algo},
but we use multivariate series as a convenient tool.
By Macaulay's theorem, the dimension of \(K[\hat{G}]_n\) can be 
computed by counting the monomials of degree \(n\) outside 
the initial ideal.
The main part of the computation is the \moebius sum
on lcm-lattice, which is a lattice on all \(\lcm\)s of monomials ordered by
divisibility~\cite{GPW1999}.
The lcm-lattice of our case is defined on initial monomials
\(\Set{f_i = \initialord{g_i} | g_i \in \Gamma}\) of a \groebner basis
\(\Gamma\) with respect to a term order \(<\);
the elements of the lattice are least common multiples of initial monomials
with \(1\) as the bottom element (the least common multiple of empty set).
Let \(L(X)\) denote the lcm-lattice on atoms \(X=\Set{\xi_1,\ldots,\xi_s}\).
Moreover, let \(M(L(X))\) denote the \moebius sum on \(L(X)\)
\[M(L(X)) = \sum_{x\in L(X)} \mu(x) x,\]
where \(\mu(x) = \mu(1, x)\)
is the \moebius function on \(L(X)\) of interval \([1, x]\) (see
\cite{Stanley1986}, for example).
It is used to obtain a multivariate generating function \(\hat{H}_G\):
\[\hat{H}_G(\tau_1,\ldots,\tau_s) = \frac{M(L(\initialord{\Gamma}))}
  {\prod_{i=1}^{s}(1-\tau_i)},\]
where \(\tau_i\) denote each elements of \(F\).
Finally, substituting
%\(t^{\deg{f_i}}\) to each \(f_i \in \initialord{\Gamma}\) and 
\(t^{\deg{\tau_i}}\) to each \(\tau_i \in F\)
gives the Ehrhart series \(H_G\).

In the following sections, the factoring properties of the Ehrhart series
are discussed based on the factorization of the \moebius sum.

\begin{Lemma}\label{lem:fct}
  Let \(X\) and \(Y\) be two finite sets such that any pair
  \(\xi \in X\) and \(\eta \in Y\) are coprime.
  Then, the \moebius sum \(M(L(X \cup Y))\) can be factored as
  \(M(L(X \cup Y)) = M(L(X)) M(L(Y))\).
\end{Lemma}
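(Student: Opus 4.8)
The plan is to exploit the fact that the lcm-lattice of a union of two mutually coprime atom sets is a direct product of the two component lattices, and then invoke the multiplicativity of the Möbius function on a product of posets. First I would observe that for any subset $S \subseteq X \cup Y$, the least common multiple $\lcm(S)$ splits uniquely as $\lcm(S \cap X) \cdot \lcm(S \cap Y)$, because every $\xi \in X$ is coprime to every $\eta \in Y$, so the two factors involve disjoint sets of variables. Consequently every element $z \in L(X \cup Y)$ is uniquely a product $z = x \cdot y$ with $x \in L(X)$ and $y \in L(Y)$, and divisibility respects this decomposition: $z \mid z'$ in $L(X\cup Y)$ if and only if $x \mid x'$ in $L(X)$ and $y \mid y'$ in $L(Y)$. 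Hence the map $z \mapsto (x,y)$ is a poset isomorphism $L(X \cup Y) \cong L(X) \times L(Y)$ (one should check it is a lattice isomorphism, but for the Möbius computation the poset isomorphism suffices).

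Next I would apply the standard product formula for the Möbius function (see e.g.\ Stanley~\cite{Stanley1986}): if $P$ and $Q$ are finite posets with $\hat 0$, then $\mu_{P \times Q}\bigl((\hat 0,\hat 0),(p,q)\bigr) = \mu_P(\hat 0, p)\,\mu_Q(\hat 0, q)$. In our notation, writing $z = xy$ as above, this gives $\mu(z) = \mu(x)\,\mu(y)$ where the three $\mu$'s are computed in $L(X\cup Y)$, $L(X)$, $L(Y)$ respectively. Then I would simply compute
\[
M(L(X\cup Y)) = \sum_{z \in L(X\cup Y)} \mu(z)\, z
= \sum_{x \in L(X)} \sum_{y \in L(Y)} \mu(x)\mu(y)\, x y
= \left(\sum_{x \in L(X)} \mu(x)\, x\right)\left(\sum_{y \in L(Y)} \mu(y)\, y\right),
\]
which is exactly $M(L(X))\,M(L(Y))$. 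The interchange of the sum and the factorization of the monomial product $xy$ into $x \cdot y$ is legitimate precisely because $x$ and $y$ are coprime, so no collapsing of distinct pairs $(x,y)$ into the same monomial occurs.

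The main obstacle — really the only nontrivial point — is establishing cleanly that $L(X \cup Y) \cong L(X) \times L(Y)$ as posets, in particular that the correspondence $z \leftrightarrow (x,y)$ is a bijection onto all of $L(X) \times L(Y)$ and not merely an injection into it. Surjectivity needs that for any $x \in L(X)$ and $y \in L(Y)$ the product $xy$ is again an lcm of atoms from $X \cup Y$: if $x = \lcm(S)$ with $S \subseteq X$ and $y = \lcm(T)$ with $T \subseteq Y$, then $xy = \lcm(S \cup T)$ by coprimality, so indeed $xy \in L(X \cup Y)$. Once this bijection and its compatibility with the order are in hand, everything else is the routine product-Möbius computation above.
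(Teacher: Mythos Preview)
Your argument is correct and matches the paper's approach: both establish $\mu(xy)=\mu(x)\mu(y)$ for $x\in L(X)$, $y\in L(Y)$ and then factor the double sum. The only cosmetic difference is that you obtain multiplicativity by recognizing $L(X\cup Y)\cong L(X)\times L(Y)$ as posets and citing the standard product formula for the M\"obius function, whereas the paper reproves that formula in place by induction on the lattice order.
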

\begin{proof}
  We claim that the \moebius function on a lcm-lattice is multiplicative;
  that is, \(\mu(1)=1\) and \(\mu(x y)=\mu(x)\mu(y)\) 
  if \(x \in X\) and \(y \in Y\).
  If this claim is valid, the lemma follows:
  \begin{eqnarray*}
    M(L(X \cup Y)) &=& \sum_{x \in X \land y \in Y} \mu(x y) x y\\
    &=& \sum_{x \in X \land y \in Y} \mu(x)\mu(y)x y\\
    &=& \sum_{x \in X}\mu(x)x \sum_{y \in Y}\mu(y)y\\
    &=& M(L(X)) M(L(Y)).
  \end{eqnarray*}

  Thus, we prove the claim.
  First, by definition, \(\mu(1)=1\).
  Second, assume that for any \(x' y' < x y\) the claim is correct.
  Then,
  \begin{eqnarray*}
    \mu(x y) &=& -\sum_{x' y' < x y} \mu(x' y')\\
    &=& -\sum_{x' y' < x y} \mu(x')\mu(y')\\
    &=& \mu(x)\mu(y) - \left(\sum_{x' \le x}\mu(x')\right) \left(\sum_{y' \le y}\mu(y')\right)\\
    &=& \mu(x)\mu(y).
  \end{eqnarray*}
  Finally, by induction on the lattice order, the claim holds.
\end{proof}

\begin{Lemma}\label{lem:spec}
  Let \(X\) be a finite set of monomials.
  Then, let \(U(t)\) be the univariate polynomial transformed from
  the \moebius sum \(M(L(X))\) on \(X\):
  forgetting the derivation of coefficients and
  substituting \(t^{\deg{x}}\) to every monomial \(x \in L(X)\).
  Then, \(U(t)\) is divisible by \((1-t)\).
\end{Lemma}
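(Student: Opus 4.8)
The plan is to reduce the claimed divisibility to the single numerical statement $U(1)=0$, and then to recognize the resulting sum as an instance of the defining recurrence of the M\"obius function. Since a univariate polynomial $U(t)$ is divisible by $(1-t)$ precisely when $U(1)=0$, I would first evaluate $U$ at $t=1$. By the transformation described just before the lemma, the M\"obius sum $M(L(X))=\sum_{x\in L(X)}\mu(x)\,x$ becomes $U(t)=\sum_{x\in L(X)}\mu(x)\,t^{\deg x}$ (the coefficients $\mu(x)$ are retained, only the monomials are replaced). Setting $t=1$ collapses every monomial $t^{\deg x}$ to $1$, so $U(1)=\sum_{x\in L(X)}\mu(x)=\sum_{x\in L(X)}\mu(1,x)$.

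Next I would invoke the standard identity for the M\"obius function of a finite poset with a least element: for any $y$ in the lattice, $\sum_{1\le x\le y}\mu(1,x)=\delta_{1,y}$ (see \cite{Stanley1986}). The lcm-lattice $L(X)$ is exactly the interval $[1,\hat 1]$ where $\hat 1=\lcm(X)$ is the join of all atoms, so $\sum_{x\in L(X)}\mu(1,x)=\delta_{1,\hat 1}$. Provided $X$ is nonempty --- which is the case of interest, since $X=\initialord{\Gamma}$ is the set of initial monomials of a \groebner basis of a nonzero ideal, each atom having positive $\psi^*$-degree --- the atoms are nontrivial monomials, hence $\hat 1=\lcm(X)\neq 1$ and $\delta_{1,\hat 1}=0$. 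Therefore $U(1)=0$, which gives $(1-t)\mid U(t)$. (When $X=\emptyset$ one has $L(X)=\{1\}$ and $U(t)=1$, so the statement is to be read with $X$ nonempty.)

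The hard part, such as it is, amounts to making the two small bookkeeping points precise: first, that after the substitution the coefficient attached to each lattice element is genuinely $\mu(1,x)$, so that $U(1)$ is the total M\"obius mass of the interval $[1,\hat 1]$; and second, that every atom of $L(X)$ has strictly positive degree, so the top of the lattice is distinct from its bottom and the interval is nontrivial. Once these are noted, the vanishing of $U(1)$ is immediate from the classical M\"obius identity, and no genuine calculation is needed.
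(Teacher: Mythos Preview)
Your proof is correct and follows the same approach as the paper: both arguments reduce the claim to $U(1)=0$ and then use that $\sum_{x\in L(X)}\mu(1,x)=0$. You supply more justification than the paper (invoking the identity $\sum_{1\le x\le \hat 1}\mu(1,x)=\delta_{1,\hat 1}$ and noting the nonemptiness hypothesis needed for $\hat 1\neq 1$), but the underlying idea is identical.
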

\begin{proof}
  Since \(U(t) = \sum_{x \in L(X)} \mu(x) t^{\deg{x}}\),
  \[U(1) = \sum_{x \in L(X)}\mu(x)1^{\deg{x}} = \sum_{x \in L(X)}\mu(x) = 0.\]
\end{proof}

\subsection{First Factoring Property}
\label{sec:first}

The first factoring property of the Ehrhart series corresponds,
roughly, to biconnected decomposition of a graph.
The main discrepancy presents with odd cycles,
which are always the most complicated part of the
discussion of Ehrhart series of edge polytopes.
We avoid digging deeper into the complications, parenthesize the
hard part as a whole.
Let \(B_1, \ldots, B_r\) be the biconnected decomposition
of a graph \(G\).  If there are odd cycle subgraphs in \(G\),
let \(G_0\) be the minimum connected subgraph containing all
biconnected components with odd cycle subgraphs of \(G\).
By renumbering, if necessary, we have a decomposition of \(G\) as
\(G_0, B_1, \ldots, B_{r'}\).
We call such decomposition the biconnected decomposition of \(G\)
{\em with oddments}.

We apply Lemma~\ref{lem:fct} to obtain the first factoring property.

\begin{Theorem}\label{thm:1st}
  The Ehrhart series \(H_G\) of a graph \(G\) has a factorization
  \[H_G(t) = H_{G_0}(t) \prod_{i=1}^{r'} H_{B_i}(t),\]
  where \(G_0\), \(B_1\), \ldots, \(B_{r'}\) are the biconnected decomposition
  of \(G\) with oddments.
\end{Theorem}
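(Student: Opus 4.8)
The plan is to reduce the statement to Lemma~\ref{lem:fct} by showing that, after choosing a suitable \groebner basis for \(I_{\hat{G}}\), the set of initial monomials partitions into blocks indexed by the pieces \(G_0, B_1,\ldots,B_{r'}\), with monomials from different blocks pairwise coprime. First I would observe that the generators listed in Theorem~\ref{thm:main} are each supported on a single biconnected component, with one exception: the generators of types \eqref{t2}--\eqref{t6} involving the \(\theta_{ij}\) variables, together with the crude generators of \(I_G\) coming from even closed walks that wander through several blocks. The key combinatorial fact is that an even closed walk in the sense of Theorem~\ref{thm:evenclosedwalks}, whose every block is a cycle or a cut edge, contributes to \(I_G\) a binomial \(T-U\) in which cut edges appear to the same power in \(T\) and in \(U\); hence in the corresponding initial monomial the cut edges either cancel or, after choosing the term order compatibly, are pushed entirely into one side. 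More importantly, any such walk that uses vertices of more than one biconnected component must pass through cut vertices, and at a cut vertex the walk enters and leaves the same block (it is a sink of both incident blocks by Theorem~\ref{thm:evenclosedwalks}(3)), so the walk decomposes as a product of closed subwalks each living in a single block; the associated binomial then factors, and a factoring binomial cannot be a minimal \groebner generator. Consequently the minimal generators of \(I_G\) — and likewise all the \(\theta\)-generators, since every \(\theta_{ij}\) and every cycle \(C_i\) lives inside \(G_0\) by construction — are each supported within a single piece of the decomposition with oddments.

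Next I would fix a term order that refines the partition of the variable set \(F = E \cup \Theta\) into the blocks \(E(G_0)\cup\Theta, E(B_1),\ldots,E(B_{r'})\) (for instance, a block elimination order), and invoke Buchberger's algorithm: since the generators within distinct blocks involve disjoint variables, all \(S\)-polynomials across blocks reduce to zero, so the union of \groebner bases of the per-block ideals is a \groebner basis of \(I_{\hat{G}}\). Therefore the initial ideal \(\initialord{I_{\hat{G}}}\) is generated by monomials each lying in exactly one block's variables, and in particular \(\initialord{\Gamma} = \bigsqcup_k X_k\) where \(X_0\) is the set of initial monomials attached to \(G_0\) (with its \(\theta\)'s) and \(X_i\) that attached to \(B_i\); any \(\xi \in X_k\) and \(\eta \in X_{k'}\) with \(k\neq k'\) are coprime because they use disjoint edge/\(\theta\) variables.

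Then the factorization is immediate: by Section~\ref{sec:moebius}, \(\hat{H}_G = M(L(\initialord{\Gamma}))/\prod_{\tau\in F}(1-\tau)\), and Lemma~\ref{lem:fct} applied repeatedly gives \(M(L(\bigsqcup_k X_k)) = \prod_k M(L(X_k))\), while the denominator factors as \(\prod_{\tau\in E(G_0)\cup\Theta}(1-\tau)\cdot\prod_{i=1}^{r'}\prod_{\tau\in E(B_i)}(1-\tau)\). Hence \(\hat{H}_G = \hat{H}_{G_0}\prod_{i=1}^{r'}\hat{H}_{B_i}\) as multivariate series, and substituting \(t^{\deg\tau}\) for each \(\tau\) — which is well-defined because the \(\psi^*\)-degrees are assigned per variable — yields \(H_G(t) = H_{G_0}(t)\prod_{i=1}^{r'}H_{B_i}(t)\). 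Here one must check that the \(\theta_{ij}\) variables, though they encode exceptional pairs of \(G\), genuinely belong to the \(G_0\)-block, i.e.\ that no exceptional pair straddles the split; this holds because both cycles of an exceptional pair are odd cycles of \(G\), hence contained in \(G_0\) by the definition of the decomposition with oddments, and the union-of-vertices monomial \(\psi(\theta_{ij})\) therefore involves only \(G_0\)-vertices.

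The main obstacle I anticipate is the \groebner-basis step, specifically justifying that the generating set of Theorem~\ref{thm:main}, once partitioned by block, really produces no cross-block interaction under Buchberger's criterion — this needs the observation that a binomial supported on disjoint variables from another binomial has an \(S\)-polynomial that is a \(K[\monoid F]\)-combination reducing to zero, which is standard but must be stated carefully because \(K[\monoid F]\) is a polynomial ring in the variables \(F\) and the notion of ``initial monomial'' and reduction is the ordinary one there. The combinatorial lemma that multi-block even closed walks give factoring (hence non-minimal, non-crude) binomials, via Theorem~\ref{thm:evenclosedwalks}(3), is the other point that deserves a clean argument; everything after that is bookkeeping with Lemmata~\ref{lem:fct} and the degree substitution.
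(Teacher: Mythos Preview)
Your overall strategy matches the paper's: argue that each generator from Theorem~\ref{thm:main} is supported in a single piece of the decomposition with oddments, deduce via Buchberger's criterion that a \groebner basis splits the same way, then apply Lemma~\ref{lem:fct} to factor the numerator and split the denominator edge-by-edge before substituting \(t^{\deg\tau}\). The paper is equally terse on the first step, so you are not omitting anything the paper actually supplies.

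However, your justification of that first step contains a real error. You claim that in a primitive even closed walk the cut edges appear to the same power in \(T\) and in \(U\), and that a walk crossing several blocks of \(G\) decomposes into closed subwalks per block by Theorem~\ref{thm:evenclosedwalks}(3). Both are false: in the bow-tie of Example~\ref{eg:twotriangles} the primitive binomial is \(e_0 e_2 e_4^2 e_6 - e_1 e_3^2 e_5 e_7\), where the cut edges \(e_3,e_4\) appear \emph{squared on one side only}, and the walk does not break into closed subwalks inside the individual biconnected components (the two triangles and the two bridges). Theorem~\ref{thm:evenclosedwalks}(3) speaks of cut vertices of the walk \(w\), not of the ambient graph \(G\); the two notions do not coincide.

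The correct reason type-(\ref{t1}) generators stay inside one piece is this. If a primitive walk visits a bipartite block \(B_i\) outside \(G_0\), it must enter and leave \(B_i\) through the unique cut vertex of \(G\) attaching \(B_i\); bipartiteness forces that excursion to have even length, so its binomial properly divides \(T-U\), contradicting primitivity. Hence any primitive walk containing an odd cycle lies entirely in \(G_0\) (all odd cycles, and all block-tree paths between them, are in \(G_0\) by definition of the oddments), while a primitive walk with no odd cycle is an even cycle and sits in a single biconnected component. With this repair, the remainder of your argument (disjoint variable supports, Buchberger's criterion, Lemma~\ref{lem:fct}, denominator split, substitution) goes through exactly as you wrote it and agrees with the paper.
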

\begin{proof}
  From Theorem~\ref{thm:main}, the only patterns that odd cycles affect
  the Ehrhart series are in the oddments subgraph \(G_0\).

  Let \(\initialord{\Gamma}\) be the initial monomials of \groebner basis
  \(\Gamma\) of the ideal \(I_{\hat{G}}\) with respect to
  a term order \(<\).
  The Ehrhart series is obtained through the multivariate generating function:
  \(\hat{H}_G(\tau_1,\ldots,\tau_s) = 
  \displaystyle\frac{M(L(\initialord{\Gamma}))}
  {\prod_{\tau_i \in F}(1-\tau_i)}\),
  as in Section~\ref{sec:moebius}.

  We know a generating set of \(I_{\hat{G}}\) from Theorem~\ref{thm:main},
  but do not know a \groebner basis, explicitly.
  If an initial monomial of the generating set is in a decomposed component,
  then it is coprime to those in other components, since the non-initial
  monomial also in the same decomposed component with the initial monomial.
  Then, the monomial remains coprime to those from other components
  after the Buchberger algorithm by Buchberger's criterion.
  Therefore, we have a \groebner basis whose initial monomials are classified
  into each decomposed component.

  By Lemma~\ref{lem:fct}, the numerator of \(\hat{H}_G\)
  is factored along with the biconnected decomposition with oddments.
  The denominator is also factored, because each edge is
  classified into a decomposed component.

  The Ehrhart series is obtained from \(\hat{H}_G\)
  by substituting %\(t^{\deg{f_i}}\) to each \(f_i \in \initialord{\Gamma}\) and
  \(t^{\deg{\tau_i}}\) to each \(\tau_i \in F\).
\end{proof}

\subsection{Second Factoring Property}
\label{sec:second}

The second factoring property focuses on an edge.
As we have seen in Lemma~\ref{lem:chordless}, 
a chordal path can be separated into shortcut
path and an even cycle, if parity permits.
We generalize the property not only on a path but also on an even cycle.

A separating pair of vertices of a graph \(G\) is a pair of vertices
\(v_1\), \(v_2\) of \(G\) that the number of connected components
of \(G-\{v_1,v_2\}\) is greater than that of \(G\).
Let \(e\) be an edge of \(G\), and
\(v_1\) and \(v_2\) be the end vertices of the edge \(e\).
Then, \(G-\tilde{e}\) denotes \(G-\{v_1, v_2\}\).
We call an edge with its end vertices a {\em separating face},
if the number of connected components
of \(G-\tilde{e}\) is greater than that of \(G\).

\begin{Lemma}\label{lem:ebi}
  Let \(G\) be a biconnected graph.
  If a separating face (\(e\) with \(u\) and \(v\)) decomposes \(G\) 
  into at least two components one of which is bipartite,
  then, there is a generating set of the ideal \(I_{\hat{G}}\)
  having no cycles stretching over the bipartite component and
  another component.
\end{Lemma}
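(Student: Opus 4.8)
The plan is to follow the strategy already used in Lemma~\ref{lem:chordless}, but now applied to every cycle that straddles the separating face rather than to a single chordal path. Write $G = G^{(1)} \cup G^{(2)}$ where the two parts share only the separating face $e$ (with endpoints $u$, $v$), and $G^{(2)}$ is the bipartite component. I would start from the generating set of $I_{\hat G}$ provided by Theorem~\ref{thm:main} and show that each listed generator can either be kept unchanged or be rewritten, modulo the other generators, into one whose support does not contain a walk crossing from $G^{(1)}$ into $G^{(2)}$.

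First I would classify the generators of Theorem~\ref{thm:main} according to where the odd cycles $C_i$, the connecting paths $N_{ij}^{(p)}$, and the even closed walks live. Since $G^{(2)}$ is bipartite, it contains no odd cycle; hence every $C_i$, every $\theta_{ij}$, and (the ``$\theta$''-involving) generators of types (\ref{t2})--(\ref{t6}) are supported entirely in $G^{(1)} \cup \{e\}$, and only generators of type (\ref{t1}) — crude generators of $I_G$ coming from even closed walks — or connecting paths $N_{ij}^{(p)}$ can possibly cross the face. So the whole problem reduces to controlling even closed walks (and connecting paths, but those are handled exactly as in Lemma~\ref{lem:chordless}) that pass through both $u$ and $v$ and use edges of both components.

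The key step is the decomposition of such a crossing walk. Let $w$ be an even closed walk whose binomial $B_w$ is a crude generator of $I_G$ and whose support meets both components; by biconnectedness of $G$ and the separating-face hypothesis, $w$ must enter and leave $G^{(2)}$ only through $u$ and $v$, so $w$ restricted to $G^{(2)}$ is a union of $u$--$v$ walks. Because $G^{(2)}$ is bipartite, $u$ and $v$ are either always at even distance or always at odd distance in $G^{(2)}$; combined with the fact that $e$ itself joins $u$ to $v$, any $u$--$v$ walk $P$ inside $G^{(2)}$ together with $e$ closes up to an \emph{even} closed walk $P \cup \{e\}$, which gives a binomial $P^{+}e - P^{-}$ (or $P^{+} - e P^{-}$, depending on parity) lying in $I_G$ and supported inside $G^{(2)} \cup \{e\}$. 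Replacing the $G^{(2)}$-portion of $w$ by $e$ using this relation yields a binomial supported in $G^{(1)} \cup \{e\}$ of strictly smaller $\psi^{*}$-degree, and the leftover relation is confined to $G^{(2)}$. I would then invoke Lemma~\ref{lem:commondivisor} (with $e$, or the appropriate monomial, as the nontrivial common divisor) to conclude that the original crossing binomial is not crude, so it can be dropped and replaced by the two smaller, non-crossing binomials; by Proposition~\ref{prop:spgenerate} the process terminates and the surviving generating set has no cycle stretching over both components.

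The main obstacle I anticipate is the bookkeeping of signs and parities: one must check that the parity of the $u$--$v$ walk inside the bipartite component is consistent with the alternating sign conventions $P^{+}, P^{-}$ and with the side of $e$ on which each edge sits, so that the substitution really produces an element of $I_G$ rather than a sign-mismatched non-member — this is the same subtlety that appears in the ``choice of sign'' discussion preceding Definition~\ref{dfn:crude} and in Lemma~\ref{lem:chordless}, but here it has to be handled uniformly over all crossing walks at once. A secondary point to be careful about is that a crossing walk might traverse the face $e$ several times or wind through $G^{(2)}$ along several $u$--$v$ arcs; I would handle this by peeling off one $G^{(2)}$-arc at a time, each peeling reducing the number of edges of $w$ inside $G^{(2)}$, and noting that Theorem~\ref{thm:evenclosedwalks}'s structural restrictions on primitive walks (blocks are cycles or cut edges, double edges are cut edges) limit how badly this can happen, so the induction on the $G^{(2)}$-edge-count closes cleanly.
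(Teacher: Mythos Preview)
Your proposal is correct and follows essentially the same route as the paper: both arguments use bipartiteness of $G^{(2)}$ to force the correct parity on any $u$--$v$ arc inside $G^{(2)}$, then split a crossing even closed walk into two non-crossing ones by inserting the shared edge $e$. The paper packages this as the single explicit identity
\[
A^{(1)}A^{(2)} - B^{(1)}B^{(2)} \;=\; A^{(2)}\bigl(A^{(1)} - B^{(1)}e\bigr) + B^{(1)}\bigl(A^{(2)}e - B^{(2)}\bigr),
\]
whereas you phrase the same reduction through Lemma~\ref{lem:commondivisor} and Proposition~\ref{prop:spgenerate}; your extra care about multiple $u$--$v$ arcs is not spelled out in the paper but is harmless and correct.
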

\begin{proof}
  By assumption, we have two decomposed components \(G^{(1)}\) and \(G^{(2)}\),
  one of which, say \(G^{(2)}\), is a bipartite subgraph of \(G\).
  In subgraphs \(G^{(i)}\) for both \(i=1,2\), the vertices \(u\) and \(v\)
  are degree at least \(2\); one of the adjacent edges is \(e\).
  Let \(A_u\) and \(A_v\) denote ones of the other edges adjacent to \(u\)
  in \(G^{(1)}\) and to \(v\) respectively, and similarly \(B_u\) and \(B_v\)
  in \(G^{(2)}\).

  Consider a big even cycle in \(G\) passing \(A_u, B_u\) and \(B_v, A_v\).
  By assumption of bipartiteness of \(G^{(2)}\), 
  if we numbers \(A_u\) the first and \(B_u\) the second on the cycle, 
  then the numbering of \(B_v\) is even and that of \(A_v\) is odd.
  Hence we can name the other edges on the cycle:
  \[A_1=A_u, B_2=B_u, A_3, B_4 \ldots, B_{2k}=B_v, A_{2k+1}=A_v, B_{2k+2}, \ldots, A_{2m-1}, B_{2m}.\]
  Then,
  \[\prod_{i=1}^{m} A_{2i-1} - \prod_{i=1}^{m} B_{2i}\]
  is in \(I_{\hat{G}}\).
  We claim that this is redundant in a generating set of the ideal.
  If the claim is valid, since the choice of even cycle is arbitrary,
  there is no need to include cycles stretching over both
  \(G^{(1)}\) and \(G^{(2)}\) in the generating set of the ideal.

  Now we prove the claim.
  Let \(A^{(i)} = \prod_{A_j \in E(G^{(i)})} A_j\) and
  \(B^{(i)} = \prod_{B_j \in E(G^{(i)})} B_j\) for \(i=1,2\). Then
  \[\prod_{i=1}^{m} A_{2i-1} - \prod_{i=1}^{m} B_{2i} = A^{(1)} A^{(2)} - B^{(1)} B^{(2)}.\]
  There are cycles in \(G^{(1)}\) and \(G^{(2)}\), each corresponds to
  \(A^{(1)} - B^{(1)} e\) and \(A^{(2)} e - B^{(2)}\) respectively:
  in other words, each half the big cycle with \(e\).
  Then,
  \[A^{(1)} A^{(2)} - B^{(1)} B^{(2)} = A^{(2)} (A^{(1)} - B^{(1)} e) + B^{(1)} (A^{(2)} e - B^{(2)}).\]
  Thus the binomial is generated by the small cycles, 
  one of which is in \(G^{(1)}\) and another in \(G^{(2)}\).
\end{proof}

Notice that \(A^{(1)} A^{(2)} - B^{(1)} B^{(2)}\) may be primitive, 
but never be crude.

\begin{Theorem}\label{thm:2nd}
  Let \(G\) be a connected graph and \(G^{(1)}\) and \(G^{(2)}\) be its subgraphs.
  Assume 
  (1) each edge of \(G\) belongs either \(G^{(1)}\) or \(G^{(2)}\),
  except exactly one edge \(e\) which is shared by both;
  (2) \(G^{(2)}\) is a bipartite graph; and
  (3) \(e\) is a part of a cycle in \(G^{(2)}\).
  Then, the Ehrhart series \(H_G(t)\) can be factored as
  \[H_G(t) = H_{G^{(1)}}(t) \left(H_{G^{(2)}}(t) (1-t)\right).\]
\end{Theorem}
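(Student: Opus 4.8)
The plan is to show that the hyperedge ideal $I_{\hat G}$ decomposes, after extension of scalars, as the sum of the hyperedge ideal of $G^{(1)}$ and the edge ideal of $G^{(2)}$, the two being glued together only along the single shared edge variable $e$; the factor $(1-t)$ in the statement then appears because in $\hat H_G$ the denominator carries $(1-e)$ once, whereas the product $\hat H_{G^{(1)}}\hat H_{G^{(2)}}$ carries it twice. Write $F^{(1)}=E(G^{(1)})\cup\Theta$, so $F=F^{(1)}\cup E(G^{(2)})$ with $F^{(1)}\cap E(G^{(2)})=\{e\}$. The first task is
\[I_{\hat G}=I_{\widehat{G^{(1)}}}K[\monoid F]+I_{G^{(2)}}K[\monoid F].\]
To get this I would first note that, since $G^{(2)}$ is bipartite and $e$ lies on a cycle of $G^{(2)}$, its two end vertices lie in different colour classes, so every path inside $G^{(2)}$ joining them is odd; hence no odd cycle of $G$ is contained in $G^{(2)}$. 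Applying Lemma~\ref{lem:ebi} to the separating face $e$ (reducing to the biconnected case by Theorem~\ref{thm:1st} where needed) supplies a generating set of $I_{\hat G}$ containing no cycle stretching over both components; combined with the classification of crude generators in Theorem~\ref{thm:main}, every odd cycle, exceptional pair and connecting path occurring in a generator of types~\eqref{t2}--\eqref{t6} may be taken inside $G^{(1)}$, while every generator of type~\eqref{t1} is supported either on $F^{(1)}$ or on $E(G^{(2)})$. Since the restriction of $\psi^*$ to the subring generated by $F^{(1)}$ (respectively by $E(G^{(2)})$) is exactly the defining map of $\widehat{G^{(1)}}$ (respectively of $G^{(2)}$), each such generator lies in $I_{\widehat{G^{(1)}}}$ or in $I_{G^{(2)}}$, which yields the displayed decomposition.

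Next I would fix, once and for all, the degree-reverse-lexicographic term order on $K[\monoid F]$ refining the $\psi^*$-grading in which $e$ is the smallest variable. A reduced \groebner basis of a toric ideal consists of binomials $T-U$ with $T$ and $U$ coprime; hence for this order the initial term of every element of the reduced \groebner basis $\Gamma_2$ of $I_{G^{(2)}}$ is the monomial not divisible by $e$ (if $e\mid T$ then $e\nmid U$, and the $e$-free monomial is the larger one). Thus $\initialord{\Gamma_2}$ is supported on $E(G^{(2)})\setminus\{e\}$. Let $\Gamma_1$ be the reduced \groebner basis of $I_{\widehat{G^{(1)}}}$ for the restricted order; its initial monomials are supported on $F^{(1)}$, possibly using $e$. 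Since $F^{(1)}$ and $E(G^{(2)})\setminus\{e\}$ are disjoint sets of variables, every $S$-polynomial between an element of $\Gamma_1$ and one of $\Gamma_2$ has coprime leading terms and reduces to zero; as $\Gamma_1\cup\Gamma_2$ also generates $I_{\hat G}$ by the decomposition above, it is a \groebner basis of $I_{\hat G}$.

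Now the atoms of the relevant lcm-lattice split into $X=\initialord{\Gamma_1}$ and $Y=\initialord{\Gamma_2}$, and every element of $X$ is coprime to every element of $Y$; by Lemma~\ref{lem:fct} the \moebius sum factors, $M(L(X\cup Y))=M(L(X))M(L(Y))$. Since $\Gamma_1$ and $\Gamma_2$ are exactly the \groebner bases produced by Algorithm~\ref{algo} for $G^{(1)}$ and for the edge-normal graph $G^{(2)}$ (whose edge ring coincides with its Ehrhart ring), dividing by $\prod_{\tau\in F}(1-\tau)=\bigl(\prod_{\tau\in F^{(1)}}(1-\tau)\bigr)\bigl(\prod_{\tau\in E(G^{(2)})\setminus\{e\}}(1-\tau)\bigr)$ gives
\begin{align*}
\hat H_G
&=\frac{M(L(X))}{\prod_{\tau\in F^{(1)}}(1-\tau)}\cdot\frac{M(L(Y))}{\prod_{\tau\in E(G^{(2)})}(1-\tau)}\cdot(1-e)\\
&=\hat H_{G^{(1)}}\,\hat H_{G^{(2)}}\,(1-e).
\end{align*}
Substituting $t^{\deg\tau}$ (degree in the $\psi^*$-sense) for each $\tau\in F$ and recalling that $e$ has degree one, this specialises to $H_G(t)=H_{G^{(1)}}(t)\,H_{G^{(2)}}(t)\,(1-t)=H_{G^{(1)}}(t)\bigl(H_{G^{(2)}}(t)(1-t)\bigr)$, as claimed.

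\textbf{Main obstacle.} The last two paragraphs are routine bookkeeping, although the choice of an $e$-smallest order is forced rather than incidental: $\psi(e)$ is algebraically dependent on the remaining edge images of $G^{(2)}$, so $e$ cannot be kept out of the initial ideal of $I_{\hat G}$ as a whole, only out of its $G^{(2)}$-part. The genuine difficulty is the structural decomposition of the first paragraph — checking that Lemma~\ref{lem:ebi}, whose proof treats only the even closed walks explicitly, together with Theorem~\ref{thm:main}, really forces \emph{all} odd-cycle data into $G^{(1)}$, in particular that no connecting path of an exceptional pair is made to detour through $G^{(2)}$ via the ends of $e$ — and handling cleanly the passage from a merely connected $G$ to a biconnected one.
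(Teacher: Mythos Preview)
Your approach is essentially identical to the paper's: decompose $I_{\hat G}$ via Lemma~\ref{lem:ebi} and Theorem~\ref{thm:main}, choose a term order making $e$ small so that initial monomials from the two pieces are coprime, factor the \moebius sum by Lemma~\ref{lem:fct}, and track the extra $(1-e)$ in the denominator. The paper dispatches your ``main obstacle'' --- connecting paths of an exceptional pair detouring through $G^{(2)}$ --- in one sentence, by invoking the non-divisibility side-condition on $N_{ij}^{(p)\pm}$ in Theorem~\ref{thm:main}(\ref{t3}), itself traced back to Lemma~\ref{lem:chordless}: any such detour has a shortcut through $e$ staying inside $G^{(1)}$ of the same parity, so the detoured generator is not crude and may be discarded.
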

\begin{proof}
  By Theorem~\ref{thm:1st}, we can assume that \(G^{(2)}\) is a
  biconnected graph.
  If \(G^{(1)}\) also is a biconnected graph, by Lemma~\ref{lem:ebi}, 
  there is a generating set consisting of binomials from each subgraph.
  Moreover, even if \(G^{(1)}\) is not a biconnected graph,
  the same argument applies on any cycles stretching over both subgraphs.
  Hence, the remaining concerns are connecting paths passing through \(G^{(2)}\)
  between odd cycles, both of which are in \(G^{(1)}\).
  However, the condition of Theorem~\ref{thm:main}(\ref{t3}) based on
  Lemma~\ref{lem:chordless} have already excluded such paths.

  Because \(G^{(2)}\) is bipartite, 
  one can chose a term order that the shared edge \(e\) does not appear
  in the initial terms.
  Thus, the same argument with in the Theorem~\ref{thm:1st} applies,
  i.e., initial monomials from different components are coprime then
  the \moebius sum is factored along with the decomposition.
  
  Finally, since the shared edge \(e\) is counted in both 
  \(H_{G^{(1)}}(t)\) and \(H_{G^{(2)}}(t)\), we should cancel a \((1-t)\)
  from the denominator of \(H_{G^{(2)}}(t)\).
\end{proof}

Note that both factoring properties are also applicable to the 
Hilbert series of edge rings.

\section{Bipartite Polygon Trees}
\label{sec:ept}

\subsection{Explicit Series}
\label{sec:ser}

We apply the factoring properties to a few families of graphs
to obtain explicit Ehrhart series for them.

Recall a {\em polygon tree} is a connected simple graph defined recursively
as follows (see \cite{KoMa2004}, for example).
A polygon, or a cycle, is a polygon tree.
If \(G\) is a polygon tree, then picking an edge of it and
make a new cycle graph \(G'\) share the edge with \(G\),
then resulting graph is a polygon tree.
We call a polygon tree a bipartite polygon tree, if all involving cycles
are even cycles.

Before proving the result of polygon trees,
let us recall the basic examples of the Ehrhart series.

\begin{Fact}\label{lem:bicon}
  The followings are well-known 
  Ehrhart series of a few biconnected graphs.
  \begin{enumerate}
  \setlength{\itemsep}{1pt}
  \item\label{i:edge} \(\displaystyle\frac{1}{1-t}\) if \(G\) is an edge;
  \item\label{i:cycle} \(\displaystyle\frac{1+t+\cdots+t^{n - 1}}{(1-t)^{2 n - 1}}\) 
    if \(G\) is an even cycle with \(2 n\) edges;
  \item\label{i:oddcycle} \(\displaystyle\frac{1}{(1-t)^{2 n - 1}}\) 
    if \(G\) is an odd cycle with \(2 n - 1\) edges.
  \end{enumerate}
\end{Fact}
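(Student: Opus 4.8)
The plan is to treat the three cases separately, all resting on the observation that a cycle vacuously satisfies the odd cycle condition (Definition~\ref{dfn:occ}): a cycle contains at most one odd cycle, so no two odd cycles can witness a violation. Consequently every cycle is edge-normal; it has no exceptional pairs (hence \(F = E\), \(\hat{G} = G\), and the \(\psi^*\)-degree is the ordinary grading with each edge in degree one), and by the discussion of Sections~\ref{sec:erei} and~\ref{sec:ehrhart} the Ehrhart series \(H_G\) is precisely the Hilbert series of the edge ring \(K[G] = K[\monoid{E}]/I_G\). Item~(\ref{i:edge}) then needs nothing further: the edge polytope of a single edge is one lattice point, so \(i_G(m) = 1\) for every \(m\) and \(H_G(t) = \sum_{m \ge 0} t^m = 1/(1-t)\). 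For items~(\ref{i:cycle}) and~(\ref{i:oddcycle}) it remains to identify \(I_G\) and read off the Hilbert series.

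To identify \(I_G\) for a cycle \(C_\ell\) with edges \(e_1, \dots, e_\ell\) and \(\phi(e_i) = v_i v_{i+1}\) (indices modulo \(\ell\)), I would compute the lattice \(\Lambda = \ker_{\mathbb{Z}}\bigl(\mathbb{Z}^\ell \to \mathbb{Z}^\ell,\ \mathbf{e}_i \mapsto \rho(e_i)\bigr)\). Reading off the coefficient of each vertex \(v_j\) shows that \(\sum_i c_i \rho(e_i) = 0\) is equivalent to \(c_{j-1} + c_j = 0\) for all \(j\); walking around the cycle, this forces \(\Lambda = 0\) when \(\ell = 2n-1\) is odd and \(\Lambda = \mathbb{Z}\cdot(1, -1, 1, -1, \dots)\) when \(\ell = 2n\) is even. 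Since \(K[G]\) is a subring of the domain \(K[\monoid{V}]\), the toric ideal \(I_G\) is prime; it coincides with the lattice ideal of the (saturated) lattice \(\Lambda\), and for a rank-one lattice this is the principal ideal on the single associated binomial. Hence \(I_G = (0)\) when \(\ell = 2n-1\), and \(I_G = (e_1 e_3 \cdots e_{2n-1} - e_2 e_4 \cdots e_{2n})\), generated in degree \(n\), when \(\ell = 2n\). The same conclusions also fall out of Theorem~\ref{thm:evenclosedwalks}: for the odd cycle even the doubled traversal violates condition~(2), so there is no primitive binomial at all, whereas for the even cycle the single traversal is the unique primitive even closed walk.

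Finally I would compute the Hilbert series in each case. For the odd cycle, \(K[G] = K[e_1, \dots, e_{2n-1}]\) is a polynomial ring in \(2n-1\) degree-one variables, so \(H_G(t) = 1/(1-t)^{2n-1}\), which is~(\ref{i:oddcycle}). For the even cycle, the generator of \(I_G\) is a nonzero, hence nonzerodivisor, element of the domain \(K[\monoid{E}]\), so the short exact sequence \(0 \to K[\monoid{E}](-n) \to K[\monoid{E}] \to K[G] \to 0\) gives \(H_G(t) = (1 - t^n)/(1-t)^{2n} = (1 + t + \cdots + t^{n-1})/(1-t)^{2n-1}\), which is~(\ref{i:cycle}). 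As a consistency check, \(\dim \mathcal{P}_{C_\ell} = 2n - 2\) in both cases — it is \(\ell - 1\) for the non-bipartite odd cycle and \(\ell - 2\) for the bipartite even cycle — so the denominator exponent \(2n - 1 = D + 1\) agrees with the normal form \(i_G^*(t)/(1-t)^{D+1}\) of Section~\ref{sec:ehrhart}.

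There is no serious obstacle here, as these are classical facts; the only step deserving a moment's care is the passage from ``the displayed binomial lies in \(I_G\)'' to ``\(I_G\) is generated by it'' for the even cycle. I would settle this either by the height count \(\operatorname{ht} I_G = \dim K[\monoid{E}] - \dim K[G] = 2n - (2n-1) = 1\) together with irreducibility of the binomial (a difference of two coprime squarefree monomials), so that it generates a height-one prime inside \(I_G\) and hence all of it, or simply by the rank-one lattice-ideal computation, or by quoting Theorem~\ref{thm:evenclosedwalks}. Everything else is routine bookkeeping.
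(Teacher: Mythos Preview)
The paper does not prove Fact~\ref{lem:bicon}; it is recorded without argument as ``well-known'' and then quoted in Proposition~\ref{prop:evenduct}. Your proof is correct and complete: the edge-normality of a single cycle, the kernel computation $c_{j-1}+c_j=0$ giving $\Lambda=0$ for odd length and $\Lambda=\mathbb{Z}\cdot(1,-1,\dots,1,-1)$ for even length, and the resulting Hilbert-series calculations are all sound. Each of your three proposed justifications that $I_G=(e_1 e_3\cdots e_{2n-1}-e_2 e_4\cdots e_{2n})$ in the even case works; in particular, the irreducibility of a difference of two coprime squarefree monomials follows immediately by viewing it as linear in any one of its variables, so the height argument goes through as well.
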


\begin{Proposition}\label{prop:evenduct}
  The Ehrhart series \(H_G(t)\) for a bipartite polygon tree graph \(G\)
  with \(e\) edges and 
  \(f_{2 n}\) cycles with \(2 n\) edges for \(n \ge 2\) is:
  \begin{equation*}
    \label{eq:edes}
    H_G(t)=\frac{\prod (1+t+\cdots+t^{n - 1})^{f_{2 n}}}{(1-t)^{e - f}}
    \tag{*}
  \end{equation*}
  with \(f=\sum f_{2 n}\).
\end{Proposition}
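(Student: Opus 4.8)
The plan is to induct on the number $f$ of polygons making up the bipartite polygon tree $G$, peeling off one cycle at a time according to the recursive definition of polygon tree and applying the Second Factoring Property (Theorem~\ref{thm:2nd}) at each step, with Fact~\ref{lem:bicon} supplying the base case.

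For the base case $f=1$, the graph $G$ is a single even cycle; since an even cycle in a simple graph has at least four edges, its length is $2n$ for some $n\ge 2$, and Fact~\ref{lem:bicon}(\ref{i:cycle}) gives $H_G(t)=(1+t+\cdots+t^{n-1})/(1-t)^{2n-1}$. This matches \eqref{eq:edes}, since here $e=2n$, $f=1$, $f_{2n}=1$ and $e-f=2n-1$.

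For the inductive step I would take a bipartite polygon tree $G$ with $f\ge 2$ polygons and write it, per the recursive definition, as a polygon tree $G'$ together with a new even cycle $C$ of length $2n$ glued along a single edge $e$ of $G'$. The first thing to check is that $G'$ is again a bipartite polygon tree with $f-1$ polygons: this is immediate, since every cycle of $G'$ is a cycle of $G$ and the polygons of $G'$ are a subset of those of $G$, all of which are even. Then I would verify the three hypotheses of Theorem~\ref{thm:2nd} for $G^{(1)}=G'$ and $G^{(2)}=C$ — the only edge shared by $G'$ and $C$ is $e$; $C$ is bipartite; and $e$ lies on the cycle $C$ — which yields
\[
H_G(t) = H_{G'}(t)\,\bigl(H_C(t)\,(1-t)\bigr).
\]
Substituting $H_C(t)(1-t)=(1+t+\cdots+t^{n-1})/(1-t)^{2n-2}$ from Fact~\ref{lem:bicon}(\ref{i:cycle}) and $H_{G'}(t)$ from the induction hypothesis, the remaining step is bookkeeping: gluing $C$ along one edge contributes $2n-1$ new edges and one new $2n$-cycle, so $e=e'+(2n-1)$, $f=f'+1$, $f_{2n}=f'_{2n}+1$ and the other $f_{2m}$ are unchanged; the numerators then combine into $\prod_m(1+t+\cdots+t^{m-1})^{f_{2m}}$ and the denominator exponent into $(e'-f')+(2n-2)=e-f$, which is \eqref{eq:edes}.

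I do not expect a serious obstacle: the statement is essentially a corollary of Theorem~\ref{thm:2nd} combined with the basic series of Fact~\ref{lem:bicon}. The one place demanding care is making sure the decomposition used at each step is genuinely an instance of the hypotheses of Theorem~\ref{thm:2nd} — in particular that peeling off the last-glued polygon is well defined and leaves a bipartite polygon tree, and that one chooses $G^{(2)}=C$ (so that the shared edge lies on a cycle of $G^{(2)}$, which need not hold if one instead took $G^{(2)}=G'$) — together with keeping the edge-count and cycle-count arithmetic straight; both are routine given the recursive definition.
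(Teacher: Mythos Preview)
Your proposal is correct and follows essentially the same approach as the paper: induction on the number of polygons, with Fact~\ref{lem:bicon}(\ref{i:cycle}) as the base case and Theorem~\ref{thm:2nd} applied at each step with the last-attached even cycle playing the role of $G^{(2)}$. Your write-up is in fact slightly more explicit than the paper's in verifying the hypotheses of Theorem~\ref{thm:2nd} and in flagging that the bipartite side must be the single cycle.
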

\begin{proof}
  We show the proposition by induction on the number of even cycles.
  If the number of cycles is one, 
  the graph is an even cycle with \(2n\) edges, then,
  from Fact~\ref{lem:bicon}(\ref{i:cycle}), 
  the Ehrhart polynomial is
  \(\displaystyle\frac{1+t+\cdots+t^{n - 1}}{(1-t)^{2 n - 1}}\).
  It coincides with \(e=2n\) and \(f=1\) case of \eqref{eq:edes}, as desired.
  Assume that \eqref{eq:edes} is valid for bipartite polygon trees with
  \(f - 1\) cycles.
  Then, a polygon tree \(G\) consisting of \(e\) edges and \(f\) even cycles
  is considered as an even cycle \(C'\) of \(2n\) edges
  and a polygon tree \(G'\) of \(e - (2n - 1)\) edges and \(f - 1\) even 
  cycles sharing an edge.
  Since the sharing edge is a separating face,
  by Theorem~\ref{thm:2nd}, the Ehrhart series can be factored as
  \[ H_G(t) = H_{G'}(t)\left(H_{C'}(t)(1-t)\right). \]
  With the induction hypothesis and Fact~\ref{lem:bicon}(\ref{i:cycle}),
  the degree of denominator in total is 
  \[\left(e - (2n - 1)\right) - (f - 1) + (2n - 1) - 1 = e - f,\]
  and the numerator is in the form of \eqref{eq:edes}.
\end{proof}

Since the graph \(G\) of Proposition~\ref{prop:evenduct} is bipartite,
the dimension \(D\) of the edge polytope is \(v - 2\),
as shown in~\cite{OH1998}.
The degree of denominator is equals to \(D+1\) for any polytopes,
thus it should be \(v - 1\) in the current case.
Since the polygon trees are planar, the Euler characteristic of the graph
gives the equation \(v - e + f = 1\), i.e., \(v - 1 = e - f\),
which is equal to the degree of our formula.

Note that the formula \eqref{eq:edes} is valid by Theorem~\ref{thm:1st}
for bipartite graphs whose biconnected components are all polygon trees, 
including bipartite cacti.
Moreover, the formula \eqref{eq:edes} is also valid if a single odd cycle
is in a polygon tree;
since we can start the induction from the odd cycle, whose Ehrhart series
is known as Fact~\ref{lem:bicon}(\ref{i:oddcycle}).
Note also that since the outerplanar graphs are subfamily of the polygon tree,
if it is bipartite or with a single odd cycle as above, the formula
applies to these cases as well.

\begin{Example}\label{eg:ladders}
  Ladders \(L_k\) are Cartesian products \(K_2 \times P_k\), where
  \(K_2\) is the complete graph of order two and
  \(P_k\) is the path graph of order \(k\).
  It is an even outerplanar graph and thus a bipartite polygon tree graph,
  consisting of \(k-1\) squares.
  Thus, the Ehrhart series \(H_{L_k}(t)\) can be deduced from 
  Proposition~\ref{prop:evenduct}:
  \[H_{L_k}(t) = \frac{(1+t)^{k-1}}{(1-t)^{2 k - 1}}.\]
\end{Example}
\begin{Example}
  We know the Ehrhart series of the bow-tie (Example~\ref{eg:twotriangles})
  and the ladders (Example~\ref{eg:ladders}).
  Then, for any combined graphs of bow-tie and ladders, sharing a vertex or
  an edge, we know their Ehrhart series.
  In case sharing a vertex, it is given by the first factoring property
  (Theorem~\ref{thm:1st}) as
  \[H_G(t) = \frac{(1 + t + t^2 + 2 t^3)(1+t)^{k-1}}{(1-t)^{2 k + 6}}.\]
  In case sharing an edge, it is given by the second factoring property
  (Theorem~\ref{thm:2nd}) as
  \[H_G(t) = \frac{(1 + t + t^2 + 2 t^3)(1+t)^{k-1}}{(1-t)^{2 k + 5}}.\]
\end{Example}

\subsection{Root Distribution}
\label{sec:roots}

The root distribution of the Ehrhart polynomials can be obtained from the 
Ehrhart series without explicit computation of the polynomials themselves
in some cases.
We use the results of Rodriguez-Villegas~\cite{Rod2002}.
For an integer \(a\), let \(S_a\) be a set of non-zero polynomials
\(p(x)\) such that
\[p(x) = v(x) \prod_{i=1}^{a}(x + i)\]
where all roots of \(v(x)\) lie on \(\realpart{x} = -(a+1)/2\).
Then the following lemma holds.
\begin{Lemma}[\cite{Rod2002}]\label{lem:roots}
Let \(\alpha\) be a root of unity
and \(f \in S_{a}\) for some \(a \in \mathbb{Z}\). Then
  \[f(x - 1) - \alpha f(x) \in S_{a-1}.\]
\end{Lemma}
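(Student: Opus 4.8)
The plan is to split off the manifest linear factors of $g(x):=f(x-1)-\alpha f(x)$ and then pin down the location of the remaining roots by a winding-number argument along the critical line.

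\textbf{Step 1 (extract the factor).} First I would write $f(x)=v(x)\prod_{i=1}^{a}(x+i)$ with every root of $v$ on $\realpart{x}=-(a+1)/2$. Then $f(x-1)=v(x-1)\,x\prod_{i=1}^{a-1}(x+i)$, so $\prod_{i=1}^{a-1}(x+i)$ divides both $f(x-1)$ and $f(x)$, hence divides $g$, giving
\[ g(x)=\Big(\prod_{i=1}^{a-1}(x+i)\Big)\,w(x),\qquad w(x)=x\,v(x-1)-\alpha(x+a)\,v(x). \]
Because $-a/2=-\big((a-1)+1\big)/2$, it then suffices to show that every root of $w$ lies on the line $L=\{\,\realpart{x}=-a/2\,\}$; comparing leading coefficients shows $\deg w=\deg v+1$ if $\alpha\neq1$ and $\deg w=\deg v$ if $\alpha=1$, which also handles the ``non-zero'' clause.

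\textbf{Step 2 (modulus one on $L$).} Next I would parametrise $L$ by $x=-a/2+iy$ ($y\in\mathbb R$) and write the roots of $v$ as $-(a+1)/2+i\beta_k$ with $\beta_k\in\mathbb R$, $k=1,\dots,\deg v$. A direct substitution gives, on $L$,
\[ R(x):=\frac{x\,v(x-1)}{\alpha(x+a)\,v(x)}=\frac1\alpha\cdot\frac{iy-a/2}{iy+a/2}\cdot\prod_{k}\frac{-\tfrac12+i(y-\beta_k)}{\tfrac12+i(y-\beta_k)}. \]
Every factor has modulus $1$ (since $\overline{\tfrac12+i\delta}=-(-\tfrac12+i\delta)$, $\overline{a/2+iy}=-(iy-a/2)$, and $|\alpha|=1$), and $v$, $x+a$ have no zeros on $L$ when $a\ge1$; so $|R|\equiv1$ on $L$. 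As $w(x)=\alpha(x+a)v(x)\big(R(x)-1\big)$, a root of $w$ on $L$ is precisely a point where $R(x)=1$, i.e. where $\arg R(x)\equiv0\pmod{2\pi}$.

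\textbf{Step 3 (monotone argument).} Then I would use that on a continuous branch $\arg\dfrac{-\tfrac12+i\delta}{\tfrac12+i\delta}=\pi-2\arctan(2\delta)$ is strictly decreasing in $\delta$ from $2\pi$ to $0$, and likewise $\arg\dfrac{iy-a/2}{iy+a/2}=\pi-2\arctan(2y/a)$ from $2\pi$ to $0$. Hence
\[ \arg R(-a/2+iy)=-\arg\alpha+\big(\pi-2\arctan(2y/a)\big)+\sum_{k}\big(\pi-2\arctan\!\big(2(y-\beta_k)\big)\big) \]
is strictly decreasing and, as $y$ runs over $\mathbb R$, sweeps an open interval of length $2\pi(\deg v+1)$ without attaining its endpoints. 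Counting the multiples of $2\pi$ strictly inside this interval yields exactly $\deg w$ of them in both cases (offset $\arg\alpha\in(0,2\pi)$ when $\alpha\neq1$, giving $\deg v+1$; interval $(0,2\pi(\deg v+1))$ when $\alpha=1$, giving $\deg v$). Each such $y$ is a root of $w$ on $L$, and there are $\deg w$ of them, so \emph{all} roots of $w$ lie on $L$; with Step~1 this gives $g\in S_{a-1}$.

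\textbf{Main obstacle.} The only substantive point is Step~3: upgrading ``$|R|\equiv1$ on $L$'' to ``$R=1$ at the right number of points of $L$''. Strict monotonicity of $\arg R$ does this, and the delicacy is the endpoint bookkeeping — $\arg R$ limits to the two ends $0$ and $2\pi(\deg v+1)$ (shifted by $-\arg\alpha$) of the swept interval without reaching them — which is exactly what makes the crossing count match $\deg w$ in both the $\alpha\neq1$ and the $\alpha=1$ cases, the latter absorbing the drop in degree. Note that only $|\alpha|=1$ is used; that $\alpha$ is a root of unity is not needed for the lemma. Low degenerate cases ($a\le0$, or $f$ constant together with $\alpha=1$, where $g=0$) lie outside the intended scope.
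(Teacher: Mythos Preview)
The paper does not prove this lemma at all: it is quoted verbatim from Rodriguez-Villegas~\cite{Rod2002} and used as a black box in Proposition~\ref{prop:ptroots}. So there is no ``paper's own proof'' to compare against.

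That said, your argument is correct and is essentially the standard proof of this type of statement. Step~1 cleanly isolates the right linear factors; Step~2 is the key symmetry observation that on the line \(L=\{\realpart{x}=-a/2\}\) each factor of \(R\) pairs a complex number with (a unit multiple of) its conjugate, forcing \(|R|\equiv1\); and Step~3 is a clean argument-principle count, where strict monotonicity of \(\arg R\) along \(L\) turns ``modulus one'' into exactly \(\deg w\) simple crossings of \(R=1\). Your endpoint bookkeeping for the two cases \(\alpha\neq1\) and \(\alpha=1\) is right, and the degree check (\(\deg w=\deg v+1\) versus \(\deg v\), with leading coefficient \(-c(d+a)\neq0\) when \(\alpha=1\) and \(a\ge1\)) correctly matches the crossing count. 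Your remark that only \(|\alpha|=1\) is used, not that \(\alpha\) is a root of unity, is also correct and worth noting.

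The only loose end is the one you already flag: the case \(a\le0\). There Step~1's factor extraction is vacuous (the product \(\prod_{i=1}^{a}\) is empty), so \(w=g=v(x-1)-\alpha v(x)\) directly and the \(x/(x+a)\) factor disappears from \(R\); the same monotonicity argument then gives \(\deg v\) (resp.\ \(\deg v-1\)) crossings, matching \(\deg w\). Since the paper only ever applies the lemma with \(a\ge1\) (starting from \(\binom{m+D}{D}\in S_D\) and shifting down \(h\le D\) times), your restriction is harmless for the intended application.
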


\begin{Proposition}\label{prop:ptroots}
  The Ehrhart polynomial \(i_G(m)\) for a bipartite polygon tree \(G\)
  with \(e\) edges and \(f_{2 n}\) cycles with \(2 n\) edges for \(n\ge 2\) is
  in \(S_{e-1-\sum n f_{2n}}\).  In other words, the roots of \(i_G(m)\)
  are negative integers or on \(\realpart{x} = -(e-\sum n f_{2 n})/2\).
\end{Proposition}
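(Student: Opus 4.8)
The plan is to extract $i_G$ from the closed form of Proposition~\ref{prop:evenduct} and then walk down the chain of sets $S_a$ by iterated application of Lemma~\ref{lem:roots}. The first step is to rewrite the Ehrhart series so that its numerator is visibly a product of factors $1-\alpha t$ with $\alpha$ a root of unity. Using $1+t+\cdots+t^{n-1}=(1-t^{n})/(1-t)$, Proposition~\ref{prop:evenduct} becomes
\[
H_G(t)=\frac{\prod_{n\ge2}(1-t^{n})^{f_{2n}}}{(1-t)^{e}}=\frac{P(t)}{(1-t)^{e-f}},\qquad
P(t)=\prod_{n\ge2}\ \prod_{j=1}^{n-1}\bigl(1-\zeta_{n}^{\,j}t\bigr)^{f_{2n}},
\]
where $\zeta_{n}=e^{2\pi i/n}$ and the $f$ factors equal to $1-t$ that come from the cyclotomic factorisations of the $1-t^{n}$ have been cancelled against $(1-t)^{f}$ from the denominator. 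Thus $P(0)=1$ and $P$ splits into exactly $s:=\sum_{n\ge2}(n-1)f_{2n}=\sum nf_{2n}-f$ linear factors $1-\alpha t$, \emph{each $\alpha$ a root of unity}. Counting edges in a polygon tree gives $e=2\sum nf_{2n}-f+1$, hence $\deg P=s\le e-f-1$; so $H_G(t)=\sum_{m\ge0}i_G(m)t^{m}$ with $i_G$ a polynomial of degree $e-f-1$, and, since at every intermediate stage below the numerator has degree $\le s\le e-f-1$, each partial product of $P$ over $(1-t)^{e-f}$ has a coefficient sequence given by a single polynomial for all $m\ge0$.

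Next I set up the induction. Write $P(t)=\prod_{i=1}^{s}(1-\alpha_{i}t)$ and, for $0\le k\le s$, put $i^{(k)}(m)=[t^{m}]\dfrac{\prod_{i=1}^{k}(1-\alpha_{i}t)}{(1-t)^{e-f}}$, a polynomial in $m$, so $i^{(s)}=i_{G}$. The base case is immediate:
\[
i^{(0)}(m)=\binom{m+e-f-1}{e-f-1}=\frac1{(e-f-1)!}\prod_{i=1}^{e-f-1}(m+i)\in S_{e-f-1},
\]
with $v$ a nonzero constant (so the condition on the zeros of $v$ holds vacuously). For the inductive step, since $[t^{m}]\bigl((1-\alpha t)R(t)\bigr)=[t^{m}]R(t)-\alpha[t^{m-1}]R(t)$ for $m\ge1$, multiplication of the series by $1-\alpha_{k}t$ gives $i^{(k)}(m)=i^{(k-1)}(m)-\alpha_{k}i^{(k-1)}(m-1)$ for all $m\ge1$; both sides being polynomials agreeing at infinitely many $m$, this is a polynomial identity, which I rewrite as
\[
i^{(k)}(x)=i^{(k-1)}(x)-\alpha_{k}i^{(k-1)}(x-1)=-\alpha_{k}\bigl(i^{(k-1)}(x-1)-\alpha_{k}^{-1}i^{(k-1)}(x)\bigr).
\]
Since $\alpha_{k}^{-1}$ is again a root of unity, Lemma~\ref{lem:roots} applied to $i^{(k-1)}\in S_{e-f-1-(k-1)}$ places $i^{(k-1)}(x-1)-\alpha_{k}^{-1}i^{(k-1)}(x)$ in $S_{e-f-1-k}$, and multiplying by the nonzero constant $-\alpha_{k}$ (which preserves $S_{e-f-1-k}$) gives $i^{(k)}\in S_{e-f-1-k}$. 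Iterating from $k=0$ to $k=s$ yields
\[
i_{G}=i^{(s)}\in S_{e-f-1-s}=S_{e-1-\sum nf_{2n}},
\]
where the last equality is again $e=2\sum nf_{2n}-f+1$. Unwinding the definition of $S_a$ with $a=e-1-\sum nf_{2n}$ then gives the stated dichotomy for the roots of $i_G$.

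The only content beyond Proposition~\ref{prop:evenduct} and Lemma~\ref{lem:roots} is the observation that the numerator $P(t)$ factors into terms $1-\alpha t$ with $\alpha$ roots of unity — equivalently $P(t)\mid\prod_{n}(1-t^{n})^{f_{2n}}$ — which is exactly what the explicit product of Proposition~\ref{prop:evenduct} provides; once that is in hand the descent through the $S_a$ is automatic. The point requiring care, and the only real friction I anticipate, is matching the operation ``multiply $H_G$ by $1-\alpha t$'' to the \emph{direction} in the hypothesis of Lemma~\ref{lem:roots}: it produces $i^{(k-1)}(x)-\alpha i^{(k-1)}(x-1)$ rather than $f(x-1)-\alpha f(x)$, which is why the passage to $\alpha^{-1}$ together with the rescaling by $-\alpha$ is needed above. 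The surrounding bookkeeping — $\deg P=s$, $s\le e-f-1$ so that all intermediate rational functions are of Ehrhart type with genuinely polynomial coefficient sequences, and $e-f-1-s=e-1-\sum nf_{2n}$ — is routine and rests entirely on the identity $e=2\sum nf_{2n}-f+1$ for bipartite polygon trees.
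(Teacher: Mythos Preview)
Your proof is correct and follows essentially the same route as the paper: start from $\binom{m+D}{D}\in S_{D}$ with $D=e-f-1$, factor the numerator of the Ehrhart series into linear factors whose roots are roots of unity, and descend through the $S_a$ by repeated application of Lemma~\ref{lem:roots}. The only cosmetic difference is that the paper packages the iteration via the shift operator $E^-$, writing $i_G(m)=i_G^*(E^-)\binom{m+D}{D}$ with $i_G^*(t)=\prod_j(t-\alpha_j)$, so each factor $(E^--\alpha_j)$ matches the hypothesis of Lemma~\ref{lem:roots} directly; your reciprocal factorisation $\prod_i(1-\alpha_i t)$ forces the detour through $\alpha^{-1}$ and the rescaling by $-\alpha$, but this is the same argument in different clothing.
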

\begin{proof}
  Let \(E^-\) denote the negative shift operator. 
  Then \(f(x - 1) - \alpha f(x)\) can be rewritten as \((E^- - \alpha)f(x)\).
  The Ehrhart polynomial \(i_G(m)\) is related to \(i_G^*(t)\), the numerator
  of the Ehrhart series, as
  \begin{eqnarray*}
    i_G(m) &=& i_G^*(E^-)\binom{m + D}{D}\\
    &=& c_h \prod_{j=1}^{h}(E^- - \alpha_j)\binom{m + D}{D},
  \end{eqnarray*}
  where \(h\) is the degree, 
  \(\alpha_j\) are the roots and
  \(c_h\) is the leading coefficient of \(i_G^*(t)\).
  From Proposition~\ref{prop:evenduct}, all roots of \(i_G^*(t)\)
  are roots of unity, and the leading coefficient is \(1\).
  Moreover, notice that \(\binom{m + D}{D}\) is in \(S_{D}\).
  When applying each factor \(E^- - \alpha_j\), we track the roots using
  Lemma~\ref{lem:roots}.
  Then, the intermediate polynomials are in \(S_{D-1}\), \(S_{D-2}\), and so on,
  and finally the Ehrhart polynomial is in \(S_{D-h}\).
  As noted after the proof of Proposition~\ref{prop:evenduct}, \(D=e-f-1\).
  On the other hand, the degree of \(i_G^*(t)\) is \(h=\sum (n-1) f_{2 n}\).
  Since \(f = \sum f_{2 n}\),
  \[D - h = e-f-1-\sum (n-1) f_{2 n} = e - 1 - \sum n f_{2 n}\]
  as required.
\end{proof}

Remark that the roots are on the strip of Conjecture~\ref{conj:dstrip},
and in fact on the left half-plane part of the region.

\nocite{Sturmfels1996}
\bibliographystyle{habbrv}
\bibliography{ehrhart,monoid,graph}

\end{document}